\DeclareMathAlphabet{\mathpzc}{OT1}{pzc}{m}{it}
\newtheorem{theorem}{Theorem}[section]
\newtheorem{corollary}[theorem]{Corollary}
\newtheorem{conjecture}[theorem]{Conjecture}
\newtheorem{lemma}[theorem]{Lemma}
\theoremstyle{definition}
\newtheorem{definition}[theorem]{Definition}
\theoremstyle{remark}
\def\varle{\leqslant}
\newcommand{\CB}{{\mathcal B}}
\newcommand{\CC}{{\mathcal C}}
\newcommand{\CE}{{\mathcal E}}
\newcommand{\CG}{{\mathcal G}}
\newcommand{\CH}{{\mathcal H}}
\newcommand{\CI}{{\mathcal I}}
\newcommand{\CM}{{\mathcal M}}
\newcommand{\CP}{{\mathcal P}}
\newcommand{\CR}{{\mathcal R}}
\newcommand{\CS}{{\mathcal S}}
\newcommand{\CV}{{\mathcal V}}
\newcommand{\CW}{{\mathcal W}}
\newcommand{\Sh}{{\mathcal{S}h}}
\newcommand{\CZ}{{\mathcal Z}}
\newcommand{\LS}{\operatorname{LS}\,}
\newcommand{\SA}{{\mathscr A}}
\newcommand{\SB}{{\mathscr B}}
\newcommand{\SM}{{\mathscr M}}
\newcommand{\SN}{{\mathscr N}}
\newcommand{\fh}{{{\mathfrak h}}}
\newcommand{\fg}{{{\mathfrak g}}} 
\newcommand{\fb}{{{\mathfrak b}}}
\newcommand{\hCG}{{\widehat\CG}}
\newcommand{\hCW}{{\widehat\CW}}
\newcommand{\hbH}{{\widehat\bH}}
\newcommand{\hCP}{{\widehat\CP}}
\newcommand{\hCS}{{\widehat\CS}}
\newcommand{\hS}{{\widehat S}}
\newcommand{\hV}{{\widehat V}}
\newcommand{\hX}{{\widehat X}}
\newcommand{\hw}{\widehat w}
\newcommand{\tS}{\widetilde{S}}
\newcommand{\DC}{{\mathbb C}}
\newcommand{\DR}{{\mathbb R}}
\newcommand{\DZ}{{\mathbb Z}}
\newcommand{\DN}{{\mathbb N}}
\newcommand{\DV}{{\mathbb V}}
\newcommand{\bH}{{\mathbf H}}
\newcommand{\ch}{{\operatorname{char}\, }}
\newcommand{\ind}{{\operatorname{ind}}}
\newcommand{\Spec}{{\operatorname{Spec}}}
\newcommand{\End}{{\operatorname{End}}}
\newcommand{\Hom}{{\operatorname{Hom}}}
\newcommand{\id}{{\operatorname{id}}}
\newcommand{\catmod}{{\operatorname{-mod}}}
\newcommand{\grk}{{{\operatorname{\underline{rk}}}}}
\newcommand{\ol}{\overline}
\newcommand{\linie}{{\,\text{---\!\!\!---}\,}}
\newcommand{\llinie}{{\text{---\!\!\!---\!\!\!---}}}
\newcommand{\ul}{\underline}
\begin{document}

\pagenumbering{arabic}
\title[Lusztig's conjecture as a moment graph problem]{Lusztig's conjecture as a moment graph problem}  
\author[]{Peter Fiebig}
\address{Mathematisches Institut, Universit{ä}t Freiburg, 79104 Freiburg, Germany}
\email{peter.fiebig@math.uni-freiburg.de}

\begin{abstract}  We   show that Lusztig's conjecture
  on the irreducible characters of a reductive algebraic group over a field of positive
  characteristic is equivalent to the  generic multiplicity
  conjecture, which gives a formula for the
  Jordan-H\"older multiplicities of baby Verma modules over the
  corresponding Lie algebra. Then we give a
  short overview of a recent proof of  the latter
  conjecture for almost all base fields via the theory of sheaves on moment graphs. 
\end{abstract}

\maketitle


\section{Introduction}
One of the first problems in the representation theory of a reductive
algebraic group is the
determination of its irreducible rational characters. One can
calculate these characters once the irreducible  characters of the
connected, simply connected almost simple algebraic groups are known. In this article we study the
representation theory of such a group.

It is well known that
the irreducible representations are parametrized by their highest
weights, which are the dominant weights  with
respect to the choice of a maximal torus and a Borel subgroup. If the
group is defined over a field $k$ of characteristic zero, the corresponding characters are given by Weyl's
character formula.

Suppose that the characteristic of $k$ is $p>0$. Using the
Steinberg tensor product theorem one can calculate all
irreducible characters from the (finite) set of characters
corresponding to {\em restricted}
highest weights.
If $p$ is at least the Coxeter
number of the group, Lusztig stated in \cite{MR604598} a conjecture about the
irreducible characters for all highest weights that appear  inside the
 Jantzen region. This conjecture was later generalized by Kato in
 \cite{MR772611} to all
restricted weights. 

Lusztig outlined a program for the proof of the conjecture which  was successfully
carried out in a combined effort by Kashiwara \& Tanisaki (\cite{MR1317626}),
Kazhdan \& Lusztig (\cite{KLequiv}) and Andersen, Jantzen \& Soergel
(\cite{MR1272539}). Lusztig's program yielded a 
proof  for {\em almost all} characteristics. More precisely, this
means that for a fixed split almost simple  group scheme $G_\DZ$ over $\DZ$ there exists a number $N$
such that the conjecture holds for the group $G_k=G_\DZ\otimes_\DZ k$ if the characteristic of $k$ is at least $N$. The number $N$, however, is unknown in all but low rank cases.

In fact, the program proved a formula for Jordan--H\"older
multiplicities of the baby Verma modules for the associated Lie
algebra (often referred to as the {\em generic multiplicity conjecture}). In this article we prove the fact that this formula is
equivalent to Lusztig's original conjecture. We mainly use ideas that
are implicit in \cite{MR772611}, see also \cite{MR933346,MR918844,MR933407,Kan}. 

The papers \cite{Fie05a, Fie05b, Fiebig:math0607501,fiebig-2007} give
a very different approach towards the generic multiplicity conjecture  and a new proof for almost
all characteristics (``almost all'' has here the same meaning as before). In addition, the methods developed in the above
series of papers show that the multiplicity one case  holds for
all characteristics that are bigger than the Coxeter number.

The main idea of our approach is to link the generic multiplicity
conjecture to a similar conjecture for intersection
sheaves on an affine moment graph. These objects are of ``linear
algebraic'' nature and admit a rather simple algorithmic
construction.

The multiplicity conjecture on affine moment graphs is known to hold
in two cases. In \cite{MR1871967} it is shown that in the case $k=\DC$
the space of global sections of  the intersection sheaves encode the intersection cohomology (with complex coefficients) of the affine Schubert varieties  of the associated complex 
group. From this we can deduce the conjecture in the case $k=\DC$. From the algebraicity and finiteness of the construction the construction then follows for all fields $k$ with large enough characteristic. 

Secondly, the article \cite{Fiebig:math0607501} contains a
characterization of the {\em $p$-smooth locus} of the moment graph
for all $p$ that are bigger or equal to the Coxeter number. This
result proves the multiplicity one case of the generic multiplicity conjecture.  The arguments used in the above mentioned  paper are rather elementary, and in particular do
not refer to the topology of affine Schubert varieties.

\subsection{Contents}
In  Section \ref{sec:sim}  we recall the main
definitions and results of the representation theory of algebraic
groups and state Lusztig's conjecture. In Section \ref{sec:gmod} we
consider the generic multiplicity conjecture and prove its
equivalence to Lusztig's conjecture.  In Section \ref{sec:mom} we introduce sheaves on moment
graphs and state a multiplicity conjecture for intersection
sheaves. Then we list the proven instances. Finally, we recall the main steps of the proof (which is contained
in \cite{fiebig-2007}) that the generic multiplicity conjecture is
implied by its moment
graph analog.   

\subsection*{Acknowledgements} I wish to thank Geordie Williamson for
various helpful remarks on an earlier version of the paper. 

\section{Simple $G$-modules  and Lusztig's conjecture}\label{sec:sim}
In this section we collect the basic results in the rational
representation theory of an algebraic group and state Lusztig's
conjecture. The most comprehensive reference for the following is
Jantzen's book \cite{MR2015057}. 

Let $k$ be an algebraically closed field of arbitrary characteristic.
It is known how to obtain the irreducible characters of any reductive algebraic
group over $k$ from the irreducible characters of the almost simple,
simply connected algebraic groups over $k$. So let us choose such
a group and let us denote it by $G$.

We choose a maximal torus $T$  in $G$ and let
$X:=\Hom(T,k^\times)$ be its  character lattice. Let $R\subset X$ be the root system of $G$ with
respect to $T$. Denote by $X^\vee:=\Hom(k^\times, T)$ the cocharacter
lattice, and for $\alpha\in R$ let $\alpha^\vee\in X^\vee$ be the
associated coroot.

Choose a Borel subgroup $B\subset G$ that contains $T$
and denote by $R^+\subset R$ its set of roots. Set $R^-=-R^+$ and
denote by $B^-\subset G$ the Borel subgroup that contains $T$ and is opposite
to $B$, so that $R^-$ is its set of roots. Let $\Pi\subset R^+$ be the set of simple roots. The set of dominant weights is
$$
X^+=\{\lambda\in X\mid \langle\lambda,\alpha^\vee\rangle\ge 0\text{
  for all $\alpha\in \Pi$}\},
$$
where $\langle\cdot,\cdot\rangle\colon X\times X^\vee\to \Hom(k^\times,k^\times)=\DZ$ denotes
the natural pairing.

\subsection{Simple $G$-modules}
For $\lambda\in X^+$ we define the {\em Weyl
  module} 
$$
H^0(\lambda):=\ind_{B^-}^G\, k_\lambda=\left\{f\colon G\to k\left| \begin{matrix}
  \text{ $f$ is regular and $f(bg)=\lambda(b^{-1})f(g)$ }\\  \text{ for
    all $b\in B^-$, $g\in G$}\end{matrix}\right\}\right..
$$
Note that $G$ acts on $H^0(\lambda)$ by $(gf)(h)=f(hg)$ for all $f\in
H^0(\lambda)$, $g,h\in G$. We denote by $L(\lambda)$ the socle of $H^0(\lambda)$, i.e.~the maximal semisimple submodule.   Part (1) of the following theorem is due to Chevalley and part (2) is the classical Borel-Weil-Bott theorem.

\begin{theorem} \label{theorem-ChevWeyl}
\begin{enumerate}
\item  Each $L(\lambda)$ for $\lambda\in X^+$ is simple,
and each rational simple $G$-module is isomorphic to $L(\lambda)$
for a unique $\lambda\in X^+$. 
\item If $\ch k=0$, then $L(\lambda)=H^0(\lambda)$.
\end{enumerate}
\end{theorem}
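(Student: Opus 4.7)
The plan is to prove (1) in two stages — simplicity of each $L(\lambda)$, then exhaustion of all simples — and to deduce (2) from complete reducibility in characteristic zero.

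For part (1), the first step is to establish three basic facts about $H^0(\lambda)$ for $\lambda\in X^+$: (a) $H^0(\lambda)\neq 0$; (b) $H^0(\lambda)^B$ coincides with the $\lambda$-weight space for $T$ and is one-dimensional; (c) every $T$-weight $\mu$ of $H^0(\lambda)$ satisfies $\mu\leq \lambda$ in the dominance order. Fact (a) is obtained by restricting sections to the big Bruhat cell $B^-B\subseteq G$, where the $B^-$-equivariance condition determines the function from its value at $1$; fact (b) follows similarly from the Bruhat decomposition, since a function that is additionally right-$B$-invariant is constant on $B^-\backslash G/B$; fact (c) is a standard weight computation on the open cell. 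Once these are in hand, Lie--Kolchin applied to any nonzero submodule $M\subseteq H^0(\lambda)$ produces a $B$-stable line inside $M$, which by (b) must be the unique $\lambda$-weight line of $H^0(\lambda)$. Hence every nonzero submodule contains this one fixed line, and therefore $L(\lambda)=\soc H^0(\lambda)$ is simple with highest weight $\lambda$.

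For the second stage — showing every rational simple $V$ is some $L(\lambda)$ — I would apply Lie--Kolchin to $V|_B$ to produce a $B$-eigenvector $v\in V$ of some $T$-weight $\lambda$, chosen maximal among the weights of $V$. That $\lambda$ is dominant follows from the $\fsl_2$-theory attached to each simple root $\alpha$: if $\langle\lambda,\alpha^\vee\rangle<0$, the corresponding $\fsl_2$-triple would produce weights strictly greater than $\lambda$, contradicting maximality. Next, the subspace $V'=\bigoplus_{\mu<\lambda}V_\mu$ is $B^-$-stable (since $U^-$ shifts weights downward), and the quotient $V/V'\cong k_\lambda$ as a $B^-$-module. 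Frobenius reciprocity
$$\Hom_G(V,H^0(\lambda))=\Hom_{B^-}(V|_{B^-},k_\lambda)$$
then yields a nonzero $G$-homomorphism $V\to H^0(\lambda)$. Simplicity of $V$ forces this map to be injective, and its image must lie in the socle $L(\lambda)$, whence $V\cong L(\lambda)$. Uniqueness of $\lambda$ is clear since it is recovered intrinsically as the highest weight of $V$.

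For part (2), in characteristic zero complete reducibility (Weyl's theorem for semisimple simply connected groups) implies that $H^0(\lambda)$ decomposes as a direct sum of simple rational $G$-modules. Each simple summand $L(\mu)$ contributes at least a one-dimensional $B$-invariant line (its highest weight line), so the one-dimensionality $\dim H^0(\lambda)^B=1$ from (b) forces there to be exactly one summand, whose highest weight must be $\lambda$ by (c). Hence $H^0(\lambda)=L(\lambda)$.

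The main obstacle is the structural input at (b): showing $\dim H^0(\lambda)^B=1$ and that every nonzero submodule contains this fixed invariant line. This is where the geometry of $G/B^-$ and the Bruhat decomposition do the real work, and it is also precisely the step whose positive-characteristic analogue — the potential failure of $H^0(\lambda)$ to be simple — is what makes $L(\lambda)$ a proper submodule in general and renders the determination of its character the deep question addressed by Lusztig's conjecture.
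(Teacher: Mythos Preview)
The paper does not prove this theorem at all: it is stated as a known result, with part (1) attributed to Chevalley and part (2) to Borel--Weil--Bott, and the reader is referred to Jantzen's book \cite{MR2015057} for details. So there is no ``paper's own proof'' to compare against; your sketch is the standard textbook argument (essentially the one in Jantzen) and is correct in outline.

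Two small points are worth tightening. First, the notation $H^0(\lambda)^B$ is not quite what you want: a nonzero $\lambda$-weight vector is not literally $B$-invariant, since $T\subset B$ acts on it by $\lambda$. You mean the space of $U$-invariants (equivalently, the span of $B$-eigenvectors), where $U$ is the unipotent radical of $B$; the Lie--Kolchin argument produces a $B$-stable line, not a $B$-fixed vector. Second, your dominance argument via ``the corresponding $\fsl_2$-triple would produce weights strictly greater than $\lambda$'' is the characteristic-zero Lie-algebra argument; in positive characteristic the relations $e f^k v = k(n-k+1)f^{k-1}v$ can degenerate modulo $p$, so one should instead use the copy of $SL_2$ inside $G$ (whose finite-dimensional modules have Weyl-symmetric weight strings, forcing a $U_\alpha$-fixed vector to sit at a nonnegative $\alpha^\vee$-weight), or bypass the issue entirely by first embedding $V\hookrightarrow H^0(\lambda)$ via Frobenius reciprocity and then invoking the separate fact that $H^0(\lambda)\neq 0$ forces $\lambda\in X^+$. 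Either fix is routine, but as written the step is characteristic-sensitive.
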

So we achieved for all fields $k$ a classification of the irreducible
rational representations of $G$.

\subsection{Irreducible characters of $G$}
 
Each algebraic torus $T$ over $k$ is a {\em diagonalisable} algebraic
group, which means that for all
rational representations $V$ of $T$ we have a decomposition
$$
V=\bigoplus_{\lambda\in X}V_\lambda,
$$
where $V_\lambda=\{v\in V\mid t.v=\lambda(t)v\quad\forall t\in
T\}$. We denote by 
$$
[V]:=\sum_{\lambda\in X}\dim_k\, V_\lambda\cdot
e^{\lambda}\in\DZ[X]=\bigoplus_{\lambda\in X}\DZ\cdot e^\lambda
$$
the {\em formal character} of $V$. If $V$ is a $G$-module, then we denote by $[V]$ the character of the $T$-module obtained by restriction.

We are interested in the characters of the simple modules
$L(\lambda)$. Recall that the Weyl group $\CW=N_G(T)/T$ of $G$ acts on
$T$ and hence on its character lattice $X$. Denote by $l\colon\CW\to\DN$ the length function
corresponding to our choice of positive roots. Define
$\rho:=1/2\sum_{\alpha\in R^+}\alpha$. We then have $\langle
\rho,\alpha^\vee\rangle = 1$ for all simple roots $\alpha \in \Pi$,
hence $\rho\in X^+$.  

Kempf's vanishing theorem implies that the
character of $H^0(\lambda)$ is given by Weyl's character formula:

\begin{theorem} For each $\lambda\in X^+$ we have
$$
[H^0(\lambda)]=\chi(\lambda):=\frac{\sum_{w\in\CW}(-1)^{l(w)}e^{w(\lambda+\rho)}}{\sum_{w\in\CW}(-1)^{l(w)}e^{w(\rho)}}.
$$
\end{theorem}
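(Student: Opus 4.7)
The plan is to realize $H^0(\lambda)$ as the space of global sections of a $G$-equivariant line bundle on the flag variety, identify its formal character with a $T$-equivariant Euler characteristic via Kempf's vanishing theorem, and then evaluate that Euler characteristic by torus localization.

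First I would reinterpret induction geometrically. By construction the induced module $H^0(\lambda)=\ind_{B^-}^G k_\lambda$ is naturally isomorphic to $\Gamma(G/B^-,\CL_\lambda)$, where $\CL_\lambda := G\times^{B^-}k_\lambda$ is the $G$-equivariant line bundle on the flag variety $G/B^-$ attached to the character $\lambda$ of $B^-$ (pulled back along $B^-\twoheadrightarrow T$). Restriction of the group action to $T\subset B^-$ makes $\CL_\lambda$ a $T$-equivariant line bundle on the smooth projective $T$-scheme $G/B^-$, and one can form the $T$-equivariant Euler characteristic
\[
\chi_T(\lambda) := \sum_{i\geq 0}(-1)^i[H^i(G/B^-,\CL_\lambda)] \in \DZ[X].
\]

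Second, I would invoke Kempf's vanishing theorem: for $\lambda\in X^+$ dominant, $H^i(G/B^-,\CL_\lambda)=0$ for every $i>0$. This immediately yields $[H^0(\lambda)]=\chi_T(\lambda)$ and is the single deep geometric input of the proof; in positive characteristic the standard proofs rest on the Frobenius splitting of $G/B^-$, so this is the main obstacle that one is forced to accept as a black box. Everything afterwards is essentially formal.

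Finally, I would compute $\chi_T(\lambda)$ by torus localization. The $T$-fixed points of $G/B^-$ are exactly the Weyl group translates $wB^-/B^-$ for $w\in\CW$; at each such point the fiber of $\CL_\lambda$ has weight $w\lambda$ and the tangent space has weights $wR^+$. Because the equivariant Euler characteristic of a coherent sheaf on a smooth projective $T$-scheme is insensitive to the base (it is computed by a universal expression in equivariant $K$-theory, and $G/B^-$ has a Chevalley form over $\DZ$), it suffices to establish the closed formula after base change to $\DC$, where the Atiyah-Bott holomorphic Lefschetz formula (or equivalently the Borel-Weil-Bott theorem) gives
\[
\chi_T(\lambda) \;=\; \sum_{w\in\CW}\frac{e^{w\lambda}}{\prod_{\alpha\in R^+}(1-e^{-w\alpha})}.
\]
Clearing the common denominator by means of the Weyl denominator identity $\sum_{w\in\CW}(-1)^{l(w)}e^{w\rho} = e^\rho\prod_{\alpha\in R^+}(1-e^{-\alpha})$ and reorganising the terms according to the $\CW$-action collapses this rational expression to the quotient $\chi(\lambda)$ in the statement, completing the proof.
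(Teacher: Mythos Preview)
The paper does not actually prove this theorem; it simply states it as a consequence of Kempf's vanishing theorem and refers the reader to Jantzen's book for details. Your proposal correctly isolates Kempf's vanishing as the essential input and the overall structure --- identify $[H^0(\lambda)]$ with the $T$-equivariant Euler characteristic via vanishing of higher cohomology, then compute that Euler characteristic --- is exactly the standard one.

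Where you diverge from the treatment in Jantzen is in the second step. You compute $\chi_T(\lambda)$ by base-changing to $\DC$ and applying Atiyah--Bott localization, relying on the base-independence of the equivariant Euler characteristic over the Chevalley $\DZ$-form. The textbook route instead stays in arbitrary characteristic and computes $\chi_T(\lambda)$ algebraically, e.g.\ via Demazure operators or by an inductive argument on simple reflections showing that the Euler characteristic satisfies the same recursion as the Weyl character. Both approaches are valid. Yours is conceptually appealing and short, but it imports localization machinery and the claim that the $T$-equivariant Euler characteristic is constant in the family over $\Spec\DZ$; this last fact is true (it follows from flatness of the Chevalley scheme and semicontinuity applied weight-by-weight), yet it is itself a nontrivial statement that you are treating as a black box alongside Kempf's theorem. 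The algebraic route is more self-contained but longer.
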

Together with Theorem \ref{theorem-ChevWeyl} we obtain a formula for all irreducible
characters in characteristic zero: 
\begin{corollary} If $\ch k=0$, then
$$
[L(\lambda)]=\chi(\lambda)
$$
for $\lambda\in X^+$.
\end{corollary}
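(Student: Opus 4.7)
The plan is to chain together the two results that have just been established. First I would apply Theorem~\ref{theorem-ChevWeyl}(2), which tells us that in characteristic zero the simple module $L(\lambda)$ coincides with the induced module $H^0(\lambda)$ for every dominant weight $\lambda \in X^+$. Restricting this identity of $G$-modules to the maximal torus $T$ and passing to formal characters gives $[L(\lambda)] = [H^0(\lambda)]$, since the formal character functor $V \mapsto [V]$ depends only on the underlying $T$-module and is manifestly invariant under isomorphism.

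Second, I would invoke the Weyl character formula proved in the preceding theorem, which identifies $[H^0(\lambda)]$ with the rational expression $\chi(\lambda)$. Composing the two identities yields $[L(\lambda)] = \chi(\lambda)$, which is the statement of the corollary.

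There is no real obstacle here: the corollary is a purely formal consequence of the two inputs, and both have been cited as already known. The substantive work has been done elsewhere, namely by Chevalley's theorem (which, in characteristic zero, specializes to the Borel--Weil--Bott theorem asserting the simplicity of $H^0(\lambda)$), and by Kempf's vanishing theorem, which underlies Weyl's character formula for the induced modules $H^0(\lambda)$. The role of this corollary in the paper is essentially to serve as the baseline against which the positive characteristic case — the subject of Lusztig's conjecture — will be compared.
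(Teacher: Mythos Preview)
Your proposal is correct and matches the paper's approach exactly: the paper presents the corollary without a separate proof, simply noting that it follows by combining Theorem~\ref{theorem-ChevWeyl}(2) with the Weyl character formula for $[H^0(\lambda)]$. There is nothing to add.
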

The problem of calculating the characters of irreducible
representations in the case $\ch k=p>0$  is much more complicated and it is still not yet completely solved.

\subsection{The affine Weyl group}

From now on suppose that the ground field  $k$ is of characteristic $p>0$.
For $\alpha\in R^+$ and $n\in\DZ$ let us define the affine
transformation $s_{\alpha,n}$ on the lattice $X$ by 
$$
s_{\alpha,n}(\lambda):=\lambda-(\langle\lambda,\alpha^\vee\rangle-pn)\alpha
$$
for all $\lambda\in X$.
Denote by $\hCW$ the {\em affine Weyl group} of our data, i.e.~the group of
affine transformations on $X$ generated by all $s_{\alpha,n}$ with
$\alpha\in R^+$ and $n\in\DZ$. The finite Weyl group $\CW$
appears as the subgroup of $\hCW$ generated by the reflections
$s_{\alpha,0}$ with $\alpha\in R^+$.
The set of {\em simple affine reflections} in  $\hCW$ is $\hCS:=\CS\cup\{s_{\gamma,1}\}$, resp., where $\gamma\in R^+$ is such that $\gamma^\vee$ is the highest 
coroot. Then $(\hCW,\hCS)$ is a  {\em Coxeter system}, hence comes equipped with a length function $l\colon \hCW\to \DN$ and a Bruhat order ``$\varle$'' on $\hCW$.

For $\eta\in X$ we define the translation $t_\eta\colon X\to X$ by
$t_\eta(\lambda)=\lambda+p\eta$.  Note that for $\alpha\in R^+$ the map $s_{\alpha,1}\circ s_{\alpha,0}$
is the translation by $p\alpha$, so $\hCW$ contains $t_\eta$ for all $\eta\in\DZ R$. 

We denote by $\hCW\times X\to X$, $(w,\lambda)\mapsto w\cdot \lambda:=w(\lambda+\rho)-\rho$
the $\rho$-shifted action of $\hCW$ on $X$.
Then the point $0\in X$ is a regular point for the $\rho$-shifted
action of $\hCW$ on $X$ (i.e.~ $w\cdot 0=0$ only if $w=e$), if and only if the prime $p$ is at least the
Coxeter number 
$$
h:=\max_{\alpha\in R^+}\{\langle\rho,\alpha^\vee\rangle+1\}.
$$
This is our general assumption from now on.

\subsection{Lusztig's conjecture}
Let $\hCW^{\text{res,+}}\subset\hCW$ be the set of {\em dominant restricted} elements, i.e.~
$$
\hCW^{\text{res,+}}:=\left\{w\in\hCW\mid 0\le\langle w\cdot 0 , \alpha^\vee\rangle<
p\text{ for all simple roots $\alpha$}\right\}.
$$
We will also need the following subsets of $\hCW$. Denote by
$w_0\in\CW$ the longest element and let 
$$
\hCW^{\text{res,$-$}}:=w_0\hCW^{\text{res,$+$}}
$$
be the set of {\em antidominant restricted} elements. Denote by
$\hw_0\in\hCW^{\text{res,$-$}}$ the longest element and  set
$$
\hCW^{\circ}:=\left\{w\in\hCW\mid w\le \hw_0\right\}.
$$
Here ``$\varle$'' denotes the Bruhat order, so $\hCW^{\circ}$ is a finite set.
Finally, denote by
$$
\hCW^{\circ,+}:=\left\{w\in\hCW^{\circ}\mid 0<\langle w\cdot
0+\rho,\alpha^\vee\rangle\text{ for all simple roots $\alpha$}\right\}
$$
the {\em dominant} elements in $\hCW^\circ$.

It is known how to deduce the characters of all simple $G$-modules from
the characters of the modules $L(w\cdot 0)$ with
$w\in\hCW^{\text{res,$+$}}$ using  the  linkage and
translation principles and Steinberg's tensor product theorem. For these
 Lusztig conjectured in \cite{MR604598}
the following formula:
\begin{conjecture}\label{Lconj} For $w\in\hCW^{\text{res,$+$}}$ we have 
$$
[L(w\cdot 0)] 
=\sum_{x\in \hCW^{\circ,+}}(-1)^{l(w)-l(x)}h_{w_0x,w_0w}(1)\chi(x\cdot 0),
$$
where $h_{a,b}(v)\in\DZ[v]$ denotes the Kazhdan-Lusztig polynomial for the
Coxeter system $(\hCW,\hCS)$ at the parameters $a,b\in\hCW$ (cf.~\cite{MR560412}).
\end{conjecture}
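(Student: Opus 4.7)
The plan is to attack Lusztig's conjecture by relating it to a companion conjecture about Jordan--H\"older multiplicities of baby Verma modules for the associated Lie algebra $\fg = \Lie(G)$, and then to prove that companion conjecture via intersection sheaves on an affine moment graph. First I would pass from rational $G$-modules to modules over the restricted enveloping algebra $u(\fg)$. The linkage and translation principles reduce the problem to understanding the principal block, whose simple objects are labelled by $\hCW^{\text{res},+}$. The relevant standard objects are the baby Verma modules $Z(\lambda)$, whose characters are (up to a shift) given by Weyl's character formula $\chi(\lambda)$. Since the transition matrix between the classes $[Z(x\cdot 0)]$ and $[L(w\cdot 0)]$ in the Grothendieck group is upper-triangular with ones on the diagonal, determining $[L(w\cdot 0)]$ in terms of the $\chi(x\cdot 0)$ is equivalent to determining the Jordan--H\"older multiplicities $[Z(x\cdot 0):L(w\cdot 0)]$. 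Inverting the formula in the conjecture above yields the \emph{generic multiplicity conjecture} asserting that these multiplicities are given by appropriate Kazhdan--Lusztig polynomials at $v=1$. Establishing this equivalence is the content of the next section, following ideas implicit in \cite{MR772611}; the subtlety is that the restricted/dominant hypothesis in Lusztig's conjecture must be propagated to arbitrary $x,w\in\hCW$ via translation functors and the Jantzen sum formula.

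Second, I would translate the generic multiplicity conjecture into a statement about sheaves on the affine moment graph $\CG$ associated with $(\hCW,\hCS)$. Working in a deformed version of the principal block (with parameter ring a suitable localization of the symmetric algebra of a Cartan), I would construct a Soergel-type ``structure'' functor sending the projective covers $P(w\cdot 0)$ of the simples to spaces of global sections of sheaves on $\CG$. Under this functor the indecomposable projectives should correspond to the intersection sheaves $\SB(w)$, so that BGG-type reciprocity
$$
[Z(x\cdot 0):L(w\cdot 0)] = (P(w\cdot 0):Z(x\cdot 0))
$$
turns the multiplicity in question into the graded rank of the stalk of $\SB(w)$ at the vertex $x$. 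The Braden--MacPherson theorem then identifies this rank with the desired Kazhdan--Lusztig polynomial value at $1$, completing the chain of reductions.

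The main obstacle is the bridge between the modular representation theory of $u(\fg)$ and the purely combinatorial world of moment graph sheaves: in positive characteristic the deformed category is \emph{not} semisimple, so one cannot mimic Soergel's classical characteristic-zero arguments directly. One must instead work over a sufficiently generic base to gain semisimplicity and then descend carefully, tracking projective covers through the process. This is the source of the restriction ``$p$ larger than some unknown $N$'': for $k=\DC$ the topological identification of global sections with intersection cohomology of affine Schubert varieties supplies the required input, and algebraicity together with finiteness of the moment graph construction transfers the result to all characteristics above an (unspecified) bound.
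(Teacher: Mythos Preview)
Your overall architecture matches the paper's: reduce Lusztig's conjecture to the generic multiplicity conjecture for baby Verma modules, then attack the latter via intersection sheaves on the affine moment graph, using Braden--MacPherson for $k=\DC$ and algebraicity to get large $p$. That part is fine.

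The genuine gap is in your first reduction. You write that the baby Verma modules $Z(\lambda)$ have characters ``(up to a shift) given by Weyl's character formula $\chi(\lambda)$'', and that therefore inverting Lusztig's formula directly yields the Jordan--H\"older multiplicities. This is false: one has
\[
[Z^\prime(\mu)] \;=\; e^{\mu}\prod_{\alpha\in R^+}\frac{1-e^{-p\alpha}}{1-e^{-\alpha}},
\]
which differs from $\chi(\mu)$ by the nontrivial factor $\prod_{\alpha}(1-e^{-p\alpha})$. Because of this discrepancy, the polynomials appearing in the generic multiplicity conjecture are \emph{not} the Kazhdan--Lusztig polynomials $h_{w_0x,w_0w}$ from Lusztig's formula but the \emph{periodic} polynomials $p_{A_{w_0x},A_{w_0w}}$. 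Bridging the two requires three separate inputs that your sketch omits: Lusztig's identity $h_{w_0x,w_0w}(1)=p_{A_x,A_w}(1)$; the inversion relation between the periodic $p_{A,B}$ and the generic $q_{A,B}$; and, crucially, Kato's theorem that the operator
\[
\eta \;=\; \prod_{\alpha\in R^+}\bigl(\id + v^2 t_{-p\alpha} + v^4 t_{-2p\alpha} + \cdots\bigr)
\]
sends $P_B$ to $Q_B$. It is precisely this $\eta$ that absorbs the $\prod(1-e^{-p\alpha})$ factor when one passes from $\chi(x\cdot 0)$ (via Kostant's formula) to $[Z^\prime(z\cdot 0)]$. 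Neither translation functors nor the Jantzen sum formula play any role in this step; the argument is purely combinatorial in $\widehat{\DZ[X]}$, following Kato. Without these ingredients your equivalence argument does not go through.
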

(We recall the definition of $h_{a,b}$ in Section \ref{poly}.)
In fact, the above conjecture is slightly more general than the
conjecture of Lusztig and was first stated by Kato in \cite{MR772611}.

\section{From $G$-modules to $\fg$-modules}\label{sec:gmod}
In the following we recall some results of the theory of restricted
representations of the Lie algebra of $G$. These can be found, stated in the 
equivalent language of $G_1T$-modules, in Jantzen's book \cite{MR2015057}.

Let $\fh\subset \fb\subset \fg$ be the Lie algebras of $T\subset
B\subset G$.  Since $\ch k=p>0$, these Lie algebras 
are $p$-Lie algebras, i.e.~they come equipped with a ``$p$-th power'' map
$X\mapsto X^{[p]}$. (By definition, $\fg$ is the Lie algebra of derivations in $\End_k(k[G])$ that commute with the left translation action of $G$, and in characteristic $p$, the
$p$-th power of such a derivation is again in $\fg$.)

Let $U=U(\fg)$ be the universal enveloping algebra and let $
U^{\text{res}}:=U/I$, 
where $I\subset U$ is the twosided ideal generated by the elements $X^p-X^{[p]}$ with $X\in\fg$,
be the {\em restricted enveloping algebra} of
$\fg$. The adjoint action of $T$ on $\fg$ yields an $X$-grading on the
spaces $\fg$, $U$ and $U^{\text{res}}(\fg)$. Denote by $U^{\text{res}}\catmod_X$ the category of $X$-graded
$U^{\text{res}}$-modules $M=\bigoplus_{\nu\in X}M_\nu$ and define the full
subcategory 
$$
\CC:=\left\{M\in U^{\text{res}}\catmod_X\left|\, \begin{matrix} \text{$M$ is
        finitely generated, } \\
Hm=\ol\nu(H) m \\
\text{ for all $H\in\fh$, $\nu\in X$, $m\in M_\nu$}
\end{matrix}
\right.\right\},
$$
where for $\nu\in X=\Hom(T,k^\times)$ we denote by $\ol \nu\in\Hom(\fh,k)$ its
differential. Note that differentiation of rational $G$-modules naturally yields objects in
$\CC$ if we remember the $X$-grading given by the action of $T$.

We define the {\em  formal character} of an object $M$ of $\CC$ by
$$
[M]:=\sum_{\nu\in X}\dim_k M_\nu \cdot e^\nu\in\DZ[X].
$$

\subsection{Standard objects and simple objects in $\CC$}
For $\lambda\in X$ define the {\em standard} or {\em baby
  Verma} module  by
$$
Z^\prime(\lambda):=U^{\text{res}}(\fg)\otimes_{U(\fb)}\, k_\lambda.
$$ 
Here $k_\lambda$ is the one-dimensional $X$-graded $\fb$-module sitting in degree
$\lambda$ on which $\fb$ acts via the character $\ol\lambda$ (here we
use the inverse of the isomorphism $\fh\stackrel{\sim}\to\fb/[\fb,\fb]$).
Note that $Z^\prime(\lambda)$ is an object in $\CC$, where the $X$-grading
is induced by the $X$-gradings on $U^{\text{res}}$ and on $k_\lambda$. Let  us denote by $L^\prime(\lambda)$ the maximal semisimple quotient of $Z^\prime(\lambda)$.
The induced $X$-grading gives $L^\prime(\lambda)$ the structure of an
object in $\CC$. We then have:

\begin{theorem} For each $\lambda\in X$ the module $L^\prime(\lambda)$ is
  simple and each simple object in $\CC$ is isomorphic to
  $L^\prime(\lambda)$ for a unique $\lambda\in X$. 
\end{theorem}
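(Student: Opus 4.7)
The plan is to mirror the classical highest-weight classification, using the finite dimensionality of $U^{\text{res}}$ to bypass the infinite-dimensional subtleties of ordinary category $\CO$. The first step is to observe that, by the restricted PBW theorem, $U^{\text{res}}$ is finite-dimensional over $k$, so every finitely generated object of $\CC$ is finite-dimensional. Next I would exploit the triangular decomposition $\fg = \fn^{-} \oplus \fh \oplus \fn^{+}$ (with $\fn^{+} \subset \fb$ the positive nilpotent part, $\fn^{-}$ opposite) together with PBW for $U^{\text{res}}$ to obtain a graded weight decomposition
$$
Z^\prime(\lambda) = \bigoplus_{\mu \leq \lambda} Z^\prime(\lambda)_\mu,
$$
where $\mu \leq \lambda$ means $\lambda - \mu \in \DN R^{+}$, with the top component $Z^\prime(\lambda)_\lambda = k \cdot (1 \otimes 1_\lambda)$ being one-dimensional and generating $Z^\prime(\lambda)$ as a $U^{\text{res}}$-module.

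From this it follows that any proper graded submodule $N \subset Z^\prime(\lambda)$ must satisfy $N_\lambda = 0$, so the sum of all proper graded submodules is itself proper and $Z^\prime(\lambda)$ possesses a unique maximal graded submodule. Hence $Z^\prime(\lambda)$ has a unique simple quotient, which must coincide with its maximal semisimple quotient $L^\prime(\lambda)$; in particular $L^\prime(\lambda)$ is simple. For the converse I would take an arbitrary simple $M \in \CC$, which is finite-dimensional by the first step. Equipping $X$ with the dominance partial order $\mu \leq \nu \Leftrightarrow \nu - \mu \in \DN R^{+}$ and choosing a weight $\lambda$ of $M$ maximal under this order, the grading compatibility of the $\fn^{+}$-action forces $\fn^{+} \cdot M_\lambda = 0$. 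Any nonzero $v \in M_\lambda$ then yields a graded $U(\fb)$-linear map $k_\lambda \to M$, which by tensor-hom adjunction extends to a nonzero graded map $Z^\prime(\lambda) \to M$. Simplicity of $M$ forces surjectivity, so $M$ is a simple quotient of $Z^\prime(\lambda)$, and hence $M \cong L^\prime(\lambda)$ by the uniqueness of the simple quotient established above. Finally, $\lambda$ is characterized intrinsically as the unique maximal element of the support of the $X$-grading on $L^\prime(\lambda)$, which gives uniqueness.

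The main technical point I anticipate is verifying that the PBW decomposition of $U^{\text{res}}$ is compatible with the $X$-grading induced by the adjoint $T$-action, and that on a weight vector $v \in M_\nu$ the element $H \in \fh$ really acts as the scalar $\ol\nu(H)$ (this is built into the definition of $\CC$, but one must use it to conclude that $M_\lambda$ is stable under $\fh$ and that the map $k_\lambda \to M_\lambda$ is a map of graded $\fb$-modules). Once these grading/$\fh$-compatibilities are in place, the remainder is a routine transcription of the standard highest-weight classification for Verma modules to the restricted graded setting.
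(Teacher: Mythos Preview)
Your argument is correct and is the standard highest-weight classification adapted to the restricted graded setting. The paper, however, does not give a proof of this theorem at all: it is stated as a known result, with the reader directed to Jantzen's book \cite{MR2015057} (in the equivalent language of $G_1T$-modules). So there is no ``paper's own proof'' to compare against; what you have written is essentially the argument one finds in standard references, and all the steps (finite-dimensionality of $U^{\text{res}}$, the PBW weight decomposition of $Z^\prime(\lambda)$ with one-dimensional top, the unique maximal graded submodule, and Frobenius reciprocity for the converse) are sound. Your anticipated technical point about the compatibility of the $X$-grading with the $\fh$-action is indeed handled by the defining condition $Hm=\ol\nu(H)m$ in the category $\CC$, and the extension $Z^\prime(\lambda)\to M$ is automatically graded because the grading on $Z^\prime(\lambda)$ is induced from those on $U^{\text{res}}$ and $k_\lambda$.
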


Now define the set of {\em restricted dominant weights} by
$$
X^+_p:=\{\lambda\in X\mid 0\le \langle\lambda,\alpha^\vee\rangle<
p\text{ for all simple roots $\alpha$}\}.
$$

\begin{theorem}\label{Lstayssimple} If $\lambda\in X^+_p$, then 
$$
L^\prime(\lambda)\cong \ol{L(\lambda)}
$$
(here we write $\ol{L(\lambda)}$ for the object in $\CC$ obtained from the
corresponding rational $G$-module by differentiation). In particular, we have
$$
[L^\prime(\lambda)]=[\ol{L(\lambda)}].
$$
for all $\lambda\in X^+_p$.
\end{theorem}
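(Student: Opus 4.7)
The plan is to construct a surjection $Z^\prime(\lambda)\twoheadrightarrow\ol{L(\lambda)}$ in $\CC$ and then invoke the classical fact that $\ol{L(\lambda)}$ is already simple in $\CC$ when $\lambda\in X^+_p$; the two together identify $\ol{L(\lambda)}$ with the unique simple quotient $L^\prime(\lambda)$ of $Z^\prime(\lambda)$, and the character identity follows at once.

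For the surjection, I would pick a nonzero highest weight vector $v_\lambda\in L(\lambda)_\lambda$. Since $L(\lambda)\hookrightarrow H^0(\lambda)=\ind_{B^-}^G k_\lambda$, Frobenius reciprocity forces the $\lambda$-weight space to be one-dimensional and $v_\lambda$ to be annihilated by $\fn^+\subset\fb$. Differentiating, the universal property of $Z^\prime(\lambda)$ produces a morphism $Z^\prime(\lambda)\to\ol{L(\lambda)}$ in $\CC$ sending $1\otimes 1_\lambda\mapsto v_\lambda$, and since $L(\lambda)$ is simple as a $G$-module it is generated by $v_\lambda$ over $U(\fg)$, hence also over $U^{\text{res}}$; this makes the map surjective.

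The main obstacle is to prove that $\ol{L(\lambda)}$ is simple in $\CC$ whenever $\lambda\in X^+_p$; this is the classical Curtis--Steinberg restriction theorem. I would handle it by passing to the equivalent language of $G_1T$-modules, under which $\ol{L(\lambda)}$ becomes the restriction of the rational $G$-module $L(\lambda)$ to the first Frobenius kernel times $T$. Using the PBW basis of $U^{\text{res}}$ together with the restrictedness condition $0\le\langle\lambda,\alpha^\vee\rangle<p$, every nonzero submodule must meet the one-dimensional $\lambda$-weight space nontrivially: otherwise, applying duality / a Steinberg-twist argument to a lowest-weight vector of the hypothetical submodule would produce a proper nonzero $G$-submodule of $L(\lambda)$, contradicting its $G$-simplicity. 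Once $v_\lambda$ is captured in a submodule, the simplicity of $L(\lambda)$ as a $G$-module propagates it across all of $\ol{L(\lambda)}$, finishing the argument.
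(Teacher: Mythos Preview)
The paper does not prove this theorem; it is quoted as a standard background result, with Jantzen's book \cite{MR2015057} (in the language of $G_1T$-modules, see II.3 there) as the reference. So your proposal has to stand on its own. The overall plan---produce a nonzero map $Z^\prime(\lambda)\to\ol{L(\lambda)}$ in $\CC$, show the target is simple, conclude---is the standard one, and the construction of the map via a highest weight vector is correct. You also correctly identify the Curtis--Steinberg restriction theorem as the heart of the matter.

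There is, however, a circularity that appears twice. You write ``since $L(\lambda)$ is simple as a $G$-module it is generated by $v_\lambda$ over $U(\fg)$'', but this implication fails in positive characteristic: $G$-simplicity means simplicity over the distribution algebra ${\rm Dist}(G)$, which is much larger than the image of $U(\fg)$. For instance any Frobenius twist $L(p\mu)=L(\mu)^{[1]}$ with $\dim L(\mu)>1$ is $G$-simple, yet $\fg$ acts trivially on it, so no single vector generates it over $U(\fg)$. The first occurrence of this slip is harmless (surjectivity onto a simple target is automatic once simplicity is known), but the same reasoning recurs at the end of your Curtis--Steinberg sketch (``the simplicity of $L(\lambda)$ as a $G$-module propagates it across all of $\ol{L(\lambda)}$''), and there it is fatal, since it sits inside the proof of simplicity itself. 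Your ``duality / Steinberg-twist argument to a lowest-weight vector'' is too vague to fill the gap; in particular, an $\fn^+$-annihilated (or $\fn^-$-annihilated) vector need not be a $B$-highest (or $B^-$-lowest) weight vector in characteristic $p$, since that requires annihilation by all divided powers $E_\alpha^{(n)}$, so one cannot directly promote a $\fg$-submodule to a $G$-submodule along these lines. The standard proofs run in the opposite direction: one shows that the simple $G_1T$-module $L^\prime(\lambda)$ extends to a $G$-module for $\lambda\in X^+_p$ (using normality of $G_1$ in $G$ and the $G$-conjugation action on the simple $G_1$-modules) and then identifies it with $L(\lambda)$ by highest weight.
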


Note that for $w\in\hCW^{\text{res,$+$}}$ we have $w\cdot 0\in X_p^+$. Hence,  in order to prove Lusztig's conjecture it is enough to calculate the  characters of the simple objects in $\CC$. 

The characters of the standard modules $Z^\prime(\mu)$ are easy to
obtain from their explicit construction: we have 
\begin{eqnarray*}
[Z^\prime(\mu)] & = & e^{\mu}\prod_{\alpha\in
  R^+}(1+e^{-\alpha}+\dots+e^{-(p-1)\alpha}) \\
& = & e^{\mu}\prod_{\alpha\in R^+}
\frac{1-e^{-p\alpha}}{1-e^{-\alpha}}.
\end{eqnarray*}
In order to calculate the formal characters of $L^\prime(\lambda)$ for
$\lambda\in X$ it is sufficient to know all Jordan-Hölder multiplicities
$[Z^\prime(\mu):L^\prime(\lambda)]$. In Section \ref{sec:genconj} we
state a conjecture for this multiplicity (known as the {\em generic
  multiplicity conjecture}, or the {\em $G_1T$-conjecture}) and
prove its equivalence to Lusztig's conjecture.

\subsection{Various polynomials}\label{poly}

We denote by $\hbH$ the {\em affine Hecke algebra}, i.e. the Hecke
algebra associated to the Coxeter system $(\hCW,\hCS)$. Recall that
$\hbH$ is the free $\DZ[v,v^{-1}]$-module with basis $\{T_x\}_{w\in
  \hCW}$ endowed with the unique $\DZ[v,v^{-1}]$-bilinear multiplication such that
\begin{eqnarray*}
{ T}_x\cdot { T}_{y} & = & { T}_{xy}\quad\text{if $l(xy)=l(x)+l(y)$}, \\
{ T}_s^2 & = & v^{-2}T_e+(v^{-2}-1)T_s \quad\text{for $s\in\hCS$}.
\end{eqnarray*}
Then ${ T}_e$ is a unit in $\hbH$ and for any $T_x$ with $x\in\hCW$
there exists a multiplicative inverse ${ T}^{-1}_x$ in $\hbH$. For
example, for $s\in\hCS$ we have ${ T}_s^{-1}=v^2T_s+(v^2-1)$. There is a duality (i.e.~ a $\DZ$-linear anti-involution) $d\colon\hbH\to\hbH$, given by $d(v)=v^{-1}$ and $d(T_x)  =  T_{x^{-1}}^{-1}$ for $x\in\hCW$. 

Now set $ H_x:=v^{l(x)}T_x$. We quote the following result of \cite{MR560412} in the language and normalization of \cite{MR1445511}.
\begin{theorem}\label{self-dual elts} 
For any $w\in\hCW$ there exists a unique element $\ul{H}_w$ with $d(\ul{H}_w)=\ul{H}_w$ and $\ul H_w\in H_w+\sum_{x\in\hCW} v\DZ[v] H_x$. 
\end{theorem}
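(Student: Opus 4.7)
The plan is to follow the classical Kazhdan--Lusztig construction by induction on the length of $w$. A key preliminary computation, using the relation $T_s^{-1}=v^2T_s+(v^2-1)$ stated in the excerpt together with $H_s=vT_s$, gives
$$
d(H_s)=H_s+(v-v^{-1})H_e\qquad\text{for every } s\in\hCS.
$$
Combined with the multiplicativity of $d$, a straightforward induction on $l(x)$ then yields
$$
d(H_x)\in H_x+\sum_{y<x}\DZ[v,v^{-1}]\,H_y
$$
for all $x\in\hCW$; that is, $d$ is upper-triangular with $1$'s on the diagonal in the basis $\{H_x\}$ with respect to the Bruhat order. This triangularity is the essential input for both parts.

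For uniqueness, suppose $\ul H_w$ and $\ul H_w'$ both satisfy the hypotheses, and write $f:=\ul H_w-\ul H_w'=\sum_{x}p_x(v)H_x$. Then $p_w=0$ and every $p_x\in v\DZ[v]$. If $f\neq 0$, pick $x$ maximal in the Bruhat order with $p_x\neq 0$. The triangularity above shows that the coefficient of $H_x$ in $d(f)$ is exactly $\overline{p_x}$, where $\overline{\cdot}$ denotes $v\mapsto v^{-1}$. Self-duality then forces $p_x=\overline{p_x}$; but $p_x\in v\DZ[v]$ and $\overline{p_x}\in v^{-1}\DZ[v^{-1}]$, whose intersection is $\{0\}$, a contradiction.

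For existence, begin with the base cases $\ul H_e:=H_e$ and $\ul H_s:=H_s+vH_e$ for $s\in\hCS$, both of which are immediately verified to be self-dual and of the required form. For $l(w)\ge 1$, choose $s\in\hCS$ with $ws<w$, invoke the inductive hypothesis for $\ul H_{ws}$, and form
$$
Y:=\ul H_{ws}\cdot\ul H_s,
$$
which is self-dual. The expansion of $Y$ is computed via $H_{ws}H_s=H_w$ (since $l(w)=l(ws)+1$), together with $H_yH_s=H_{ys}$ if $ys>y$ and $H_yH_s=H_{ys}+(v^{-1}-v)H_y$ if $ys<y$, yielding $Y=H_w+\sum_{y<w}q_y(v)H_y$ with $q_y\in\DZ[v,v^{-1}]$. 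If every $q_y\in v\DZ[v]$, set $\ul H_w:=Y$. Otherwise, pick $y<w$ maximal with $q_y\notin v\DZ[v]$; writing the non-positive part of $q_y$ as $c_0+\mu_y$ with $c_0\in\DZ$ and $\mu_y\in v^{-1}\DZ[v^{-1}]$, the element $c:=c_0+\mu_y+\overline{\mu_y}$ is the unique $d$-invariant Laurent polynomial with this non-positive part. Subtracting $c\cdot\ul H_y$ from $Y$ preserves self-duality (since both factors are $d$-invariant), leaves the coefficient of $H_w$ equal to $1$, does not disturb coefficients at positions above $y$, and forces the $H_y$-coefficient into $v\DZ[v]$.

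The main and essentially only obstacle is verifying that this correction process terminates and delivers an element of $H_w+\sum_{x}v\DZ[v]H_x$. Since $\ul H_y=H_y+\sum_{z<y}v\DZ[v]H_z$, each subtraction $c\cdot\ul H_y$ affects only coefficients at positions strictly below $y$; thus the position of the maximal ``bad'' coefficient strictly decreases at each step, and the process must terminate after finitely many steps within the finite Bruhat interval below $w$, producing the desired $\ul H_w$.
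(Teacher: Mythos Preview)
Your argument is correct and is precisely the classical Kazhdan--Lusztig construction (as reformulated by Soergel). One minor remark: the paper calls $d$ an ``anti-involution'', but as you implicitly use, $d$ is in fact a ring \emph{homomorphism} (one checks $d(T_x)d(T_y)=T_{x^{-1}}^{-1}T_{y^{-1}}^{-1}=(T_{y^{-1}}T_{x^{-1}})^{-1}=T_{(xy)^{-1}}^{-1}=d(T_xT_y)$ when lengths add), so your appeal to ``multiplicativity of $d$'' is justified and indeed necessary for the self-duality of $Y=\ul H_{ws}\ul H_s$. Also note that since the initial coefficients $q_y$ of $Y$ actually lie in $\DZ[v]$ (not merely $\DZ[v,v^{-1}]$), your correction constants $c$ are always integers, so the general Laurent-polynomial correction you describe is more elaborate than needed---but it is not wrong.

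As for the comparison: the paper does not prove this theorem at all; it simply quotes it from \cite{MR560412} in the normalization of \cite{MR1445511}. You have supplied the standard proof that the paper omits.
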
 
For example, we have $\ul{H}_e=H_e$ and $\ul{H}_s=H_s+v H_e$ for each $s\in\hCS$.  We define the polynomials $h_{x,w}\in \DZ[v]$ for $w,x\in \hCW$ by the equations
$$
\ul H_w=\sum_{x\in \hCW} h_{x,w} H_x.
$$
It turns out that $h_{w,w}=1$ and that $h_{x,w}\ne 0$ implies $x\le w$. 
The usual Kazhdan--Lusztig polynomials $P_{x,w}$ (cf.~ \cite{MR560412}) then satisfy
$
h_{x,w}=v^{l(w)-l(x)}P_{x,w}$.
In particular, we have $h_{x,w}(1)=P_{x,w}(1)$.

We need two more sets of polynomials, the {\em generic} polynomials
$q_{A,B}$ and the {\em periodic} polynomials $p_{A,B}$. Both  are
indexed by pairs of {\em alcoves}. So let us introduce alcove geometry. 

Consider the real vector space $V=X\otimes_{\DZ}\DR$. It is
acted upon by $\hCW$ and for each
$\alpha\in R^+$, $n\in\DZ$ the affine hyperplane  
$$
H_{\alpha,n}=\{v\in V\mid \langle v,\alpha^\vee\rangle=pn\}
$$
is the set of fixed points for $s_{\alpha,n}$. Denote by $\SA$ the set of connected components of the complement of the union of all
reflection hyperplanes $H_{\alpha,n}$ in $V$.  An element $A\in \SA$ is called an {\em alcove}. The affine Weyl group $\hCW$ naturally acts on $\SA$, but  note that the translations $t_\alpha$ with $\alpha\in X$ act on $\SA$ as
well. Let 
$$
C:=\{v\in V\mid \langle v,\alpha^\vee\rangle >0 \text{ for all simple
  roots $\alpha$}\}
$$
be the {\em  dominant cone}, and denote by $A^+\subset C$ the unique alcove
that contains $0\in V$ in its closure. Then the map
\begin{eqnarray*}
\hCW & \to & \SA, \\
w& \mapsto & A_w:=w(A^+)
\end{eqnarray*}
is a bijection that we fix from now on. We denote the inverse map by
$A\mapsto w_A$. 

For convenience we now reparametrize the polynomials $h_{x,y}$: set
$$
h_{A,B}:=h_{w_0w_A,w_0w_B}
$$
for all $A,B\in\SA$. (Recall that $w_0\in \CW$ is the longest element.) Now let us fix two alcoves  $A$ and $B$  and consider for each $\lambda\in X_+$ the polynomial
$h_{t_{\lambda}(A), t_{\lambda}(B)}$. It turns out that this polynomial does not depend on $\lambda$ as long as $\lambda$ lies ``deep inside'' $X^+$ (cf. \cite[Theorem 6.1 \& Proposition 3.4]{MR1445511}), so there is a well-defined polynomial
$$
q_{A,B}:=\lim_{\lambda\in X^+} h_{t_{\lambda}(A), t_{\lambda}(B)}.
$$
It is called the {\em generic} polynomial associated to the pair
$(A,B)$.

Now $q_{A,B}\ne 0$ implies that $ h_{t_{\lambda}(A), t_\lambda(B)}\ne 0$
for $n\gg 0$, hence that $t_\lambda w_{A}\le t_{\lambda}w_B$ for $\lambda$ deep inside $X^+$. This
gives rise to a partial order on the set $\SA$: we define
$A\preceq B$ if $t_{\lambda}w_A\le t_{\lambda}w_B$ for $\lambda$ big enough. Hence
$q_{A,B}\ne 0$ implies $A\preceq B$.

There
is a third set of polynomials $p_{A,B}$, the {\em periodic}
polynomials (the original definition is due to Lusztig). They are defined as the coefficients of certain selfdual elements in the {\em periodic Hecke module} of the affine Hecke algebra $\hbH$ (cf. \cite[Theorem 4.3]{MR1445511}).  We could also define them by the following inversion property with respect to the generic polynomials. We have for all pairs $(A,C)$ of alcoves the following identity (cf. \cite[Theorem 6.1]{MR1445511}):
$$
\sum_{B\in\SA} (-1)^{d(A,B)} q_{w_0(B), w_0(A)}\cdot p_{B,C}=\delta_{A,C}.
$$
Here  $(-1)^{d(A,B)}$ denotes the parity of the number of reflection hyperplanes separating
the alcoves $A$ and $B$. We have
$(-1)^{d(A,B)}=(-1)^{l(w_A)-l(w_B)}$. Moreover, for fixed $C$ we have $p_{B,C}\ne
0$ only for finitely many $B$, so the sum above is finite.

\subsection{The generic multiplicity conjecture}\label{sec:genconj}

Now we can state the multiplicity conjecture for the standard modules
$Z^\prime(x\cdot 0)$.

\begin{conjecture}\label{genconj} For each $x,w\in\hCW$ we have
$$
[Z^\prime(x\cdot 0):L^\prime(w\cdot 0)]=p_{A_{w_0x},A_{w_0w}}(1).
$$
\end{conjecture}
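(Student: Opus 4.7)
The plan is to convert the conjecture into a statement about sheaves on a moment graph and then to prove the latter partly geometrically, partly combinatorially, following the strategy of \cite{Fie05a,Fie05b,Fiebig:math0607501,fiebig-2007}. I would first restrict to a single block of $\CC$, namely the principal block containing the simples $L^\prime(w\cdot 0)$ for $w\in\hCW$. Each such simple admits a projective cover $P^\prime(w\cdot 0)$ in $\CC$; the projective $P^\prime(w\cdot 0)$ carries a baby Verma flag, and a BGG-type reciprocity (provable in this setting by exploiting the self-duality of the block up to a shift) yields
\[
(P^\prime(w\cdot 0):Z^\prime(x\cdot 0))=[Z^\prime(x\cdot 0):L^\prime(w\cdot 0)].
\]
The conjecture is thereby reduced to computing the standard multiplicities in the indecomposable projectives of $\CC$.

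The second step is to pass from $\CC$ to the category of sheaves on the affine moment graph $\CG$ attached to $(\hCW,\hCS)$. One first deforms the principal block over a suitable regular local base ring, following Andersen--Jantzen--Soergel, and then constructs a structure functor from this deformed block into the category of sheaves on $\CG$. Under this functor the deformed baby Verma $Z^\prime(x\cdot 0)$ maps to the skyscraper sheaf at the vertex $A_{w_0 x}$, while the projective cover of $L^\prime(w\cdot 0)$ is sent to the Braden--MacPherson sheaf $\CB(A_{w_0 w})$. The baby Verma multiplicity in the projective then translates into the rank of the stalk
\[
\rk\CB(A_{w_0 w})_{A_{w_0 x}},
\]
so the conjecture reduces to the moment-graph identity $\rk\CB(A_{w_0 w})_{A_{w_0 x}}=p_{A_{w_0 x},A_{w_0 w}}(1)$.

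The third step is to establish this moment-graph identity. For $k=\DC$ one identifies global sections of $\CB(A)$ with the equivariant intersection cohomology of the corresponding affine Schubert variety in the affine flag variety, using the main result of \cite{MR1871967}; combining this with the description of the periodic polynomials as coefficients of self-dual elements in the periodic Hecke module recalled in Section \ref{poly}, one obtains the required identity in characteristic zero. To descend to positive characteristic, one uses that the Braden--MacPherson sheaf is produced by an explicit algorithm that is local on $\CG$ and involves only finitely many vertices and edges in the interval below $A_{w_0 w}$; hence for every prime $p$ larger than an explicit bound depending only on $\hCW$, the stalk ranks coincide with those computed over $\DC$, and the identity carries over.

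The principal obstacle lies in the second step: showing that the image of the indecomposable projective cover under the structure functor is precisely the Braden--MacPherson sheaf. This requires constructing a deformation of $\CC$ for which baby Verma flags survive, proving that the structure functor is exact on short exact sequences of objects with Verma flag and fully faithful on projectives, and finally verifying the defining universal property of Braden--MacPherson sheaves for the images of the projective covers. A secondary, more quantitative obstacle is the effectivity of the bound on the exceptional characteristics, since controlling the denominators that appear in the Braden--MacPherson algorithm in a uniform way outside of low-rank cases remains difficult.
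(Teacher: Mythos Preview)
Your proposal follows essentially the same strategy as the paper's Section~\ref{sec:mom} (which summarizes \cite{fiebig-2007}): Brauer--Humphreys reciprocity to pass to Verma multiplicities in projective covers, deformation of the block \`a la Andersen--Jantzen--Soergel, comparison with Braden--MacPherson sheaves on the affine Bruhat moment graph, verification of the stalk ranks via equivariant intersection cohomology of affine Schubert varieties over $\DC$, and a finiteness argument to descend to almost all primes. Note that the statement is a conjecture; both the paper and your proposal establish it only for almost all characteristics.

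There is one substantive difference in execution. You propose to show that the structure functor sends the indecomposable projective cover \emph{exactly} to the Braden--MacPherson sheaf, and you correctly flag this identification as the principal obstacle. The paper circumvents it: the functor $\Phi$ of Theorem~\ref{Mth} runs in the opposite direction (from sheaves to projectives), and one only shows that $Q^\prime_{\tilde S}(w\cdot 0)$ occurs as a \emph{direct summand} of $\Phi(\SB_k(w))$. This yields merely the inequality
\[
[Z^\prime(x\cdot 0):L^\prime(w\cdot 0)]\;\le\;(\grk\,\SB_k(w)^x)(1)\;=\;h_{x,w}(1)\;=\;p_{A_{w_0x},A_{w_0w}}(1),
\]
and equality is then forced by an independent lower-bound argument taken from \cite{MR1272539}. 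This two-sided squeeze is what sidesteps the obstacle you name; your more ambitious direct identification would in fact only follow \emph{a posteriori} once the multiplicities are known. A minor notational point: the paper's moment graph has vertex set $\hCW$ with the Bruhat order, and the relevant stalk-rank statement is Conjecture~\ref{momconj}, phrased in terms of the ordinary affine Kazhdan--Lusztig polynomials $h_{x,w}$; the periodic polynomials $p_{A,B}$ enter only at the very end via the numerical identity $h_{x,w}(1)=p_{A_{w_0x},A_{w_0w}}(1)$, rather than directly as stalk ranks.
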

Due to an inherent $pX$-symmetry in the category $\CC$ it is enough
to prove this conjecture for elements $w\in\hCW^{\text{res,$-$}}$ or $w\in\hCW^{\text{res,$+$}}$.

The following statement is well-known, but I could not find a proof
in the literature. The main ideas of the proof  provided below
are due to Kato (cf.~ \cite[Theorem 3.5]{MR772611}).

\begin{theorem} Conjecture  $\ref{genconj}$ is equivalent to Lusztig's
  conjecture  $\ref{Lconj}$. 
\end{theorem}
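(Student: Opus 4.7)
The plan is to translate both conjectures into linear algebra statements about the expansion of $[L^\prime(w \cdot 0)]$ in the basis $\{[Z^\prime(x \cdot 0)]\}_{x \in \hCW}$, and then to match coefficients via the inversion identity between generic and periodic polynomials. First I would invoke Theorem \ref{Lstayssimple} to identify $[L(w \cdot 0)]$ with $[L^\prime(w \cdot 0)]$ for $w \in \hCW^{\text{res,$+$}}$, moving Lusztig's conjecture entirely into the category $\CC$. Second, combining the explicit product formula for $[Z^\prime(\mu)]$ with the Weyl denominator identity yields the bridge identity
\[
\sum_{y \in \CW}(-1)^{l(y)}\,[Z^\prime(y \cdot \mu)] \;=\; \chi(\mu)\prod_{\alpha \in R^+}(1 - e^{-p\alpha}),
\]
which converts alternating sums of baby-Verma characters into Weyl characters (the parasitic factor $\prod_\alpha(1-e^{-p\alpha})$ either cancels across the final equation or is absorbed by working in a suitable quotient of $\DZ[X]$, as in Kato's original argument).

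Assuming Conjecture \ref{genconj}, the expansion
\[
[Z^\prime(x \cdot 0)] \;=\; \sum_{w} p_{A_{w_0 x}, A_{w_0 w}}(1)\,[L^\prime(w \cdot 0)]
\]
can be inverted using the identity
\[
\sum_{B \in \SA}(-1)^{d(A,B)}\,q_{w_0(B), w_0(A)}(1)\,p_{B,C}(1) \;=\; \delta_{A,C}
\]
recalled in the text. This expresses $[L^\prime(w \cdot 0)]$ as a signed sum of $[Z^\prime(x\cdot 0)]$ with coefficients involving $q_{A_{w_0 x}, A_{w_0 w}}(1)$ and the sign $(-1)^{d(A_{w_0 x}, A_{w_0 w})} = (-1)^{l(w)-l(x)}$. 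By the translation-invariance built into the definition $q_{A,B} = \lim_\lambda h_{t_\lambda A, t_\lambda B}$, and because one may conjugate the relevant alcoves into the stable range, $q_{A_{w_0 x}, A_{w_0 w}}(1)$ coincides with $h_{w_0 x, w_0 w}(1)$ whenever $w \in \hCW^{\text{res,$+$}}$ and $x \in \hCW^{\circ,+}$. Applying the bridge identity then rewrites the expansion as Lusztig's formula. The converse is obtained by reading the chain backwards, using the linear independence of $\{\chi(x \cdot 0)\}$ for regular $x \cdot 0$ to extract the multiplicities uniquely, together with the $pX$-symmetry in $\CC$ that reduces verification of Conjecture \ref{genconj} to restricted $w$.

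The hard part will be the index-set bookkeeping. Conjecture \ref{genconj} in principle ranges over all $x \in \hCW$ (with only finitely many nonzero multiplicities for each $w$), whereas Lusztig's formula sums only over the finite set $\hCW^{\circ,+}$. I would handle this by showing that contributions from $x \notin \hCW^{\circ,+}$ either vanish individually, because $\chi(x \cdot 0) = 0$ whenever $x \cdot 0$ lies on a reflection hyperplane of the finite Weyl group, or cancel in pairs under the symmetry $\chi(y \cdot \mu) = (-1)^{l(y)}\chi(\mu)$ for $y \in \CW$; and that the Bruhat restriction $x \le \hw_0$ forcing $x \in \hCW^\circ$ comes out of the support of the periodic polynomials $p_{A_{w_0 x}, A_{w_0 w}}$ when $w$ is restricted. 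This combinatorial bookkeeping, together with the verification that $h_{A,B}(1) = q_{A,B}(1)$ in the relevant range via translation invariance, is where Kato's argument (compare \cite[Theorem~3.5]{MR772611}) does the essential work.
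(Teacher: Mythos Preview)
Your broad architecture---invert the $p$/$q$ relation, pass through Theorem~\ref{Lstayssimple}, and convert between Weyl characters and baby Verma characters---matches the paper. But the two places where you wave your hands are precisely the two nontrivial inputs, and your description of them is not right.

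First, after inverting the $p$--$q$ identity you obtain (up to your minor index slip) coefficients $q_{A_z,A_w}(1)$, not $q_{A_{w_0z},A_{w_0w}}(1)$; the paper confirms
\[
[L^\prime(w\cdot 0)]=\sum_{z\in\hCW}(-1)^{l(w)-l(z)}q_{A_z,A_w}(1)\,[Z^\prime(z\cdot 0)].
\]
The crucial point is that these $q$'s are \emph{not} equal to the corresponding $h$'s by any ``translation into the stable range'' argument: for $w\in\hCW^{\text{res},+}$ the alcove $A_w$ sits near the origin, far from the stable region, and in general $q_{A_z,A_w}\ne h_{A_z,A_w}$. The paper never asserts a coefficient-wise identity between $q$ and $h$. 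What it uses instead is Lusztig's result (\cite[Cor.~5.3]{MR591724}) that $h_{w_0x,w_0w}(1)=p_{A_x,A_w}(1)$, relating $h$ to the \emph{periodic} polynomials, not the generic ones.

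Second, your ``parasitic factor'' $\prod_\alpha(1-e^{-p\alpha})$ does not simply cancel and cannot be absorbed into a quotient. Handling it is exactly the content of Kato's theorem $\eta(P_B)=Q_B$ (\cite[Thm.~6.3]{MR1445511}): the operator $\eta=\prod_\alpha(\id+v^2t_{-p\alpha}+\cdots)$, evaluated at $v=1$ and transported along $A\mapsto(-1)^{l(w_A)}e^{w_A\cdot 0}$, becomes multiplication by $\prod_\alpha(1-e^{-p\alpha})^{-1}$, and it is this that converts the $p$--sum into the $q$--sum. So the passage from $\sum p_{A_z,A_w}(1)e^{z\cdot 0}\prod(1-e^{-\alpha})^{-1}$ to $\sum q_{A_z,A_w}(1)[Z^\prime(z\cdot 0)]$ is not bookkeeping but the heart of the argument. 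Your proposal correctly senses that Kato's work is essential, but mischaracterizes what it provides.
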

\begin{proof}  We define a partial order on $X$ by setting $\lambda\le
  \mu$ if $\mu-\lambda$ is a sum of positive roots. Using this order
  we get a completion  of the group ring
  $\DZ[X]$: define
  $\widehat{\DZ[X]}\subset\prod_{\lambda\in X} \DZ\cdot e^{\lambda}$
  as the subset of elements $\sum_{\lambda\in X}a_\lambda\cdot
  e^{\lambda}$ for which there exists some $\mu\in X$ such that
  $a_\lambda\ne 0$ implies $\lambda\le \mu$. Note that the convolution
  product is still well defined on $\widehat{\DZ[X]}$ and makes it
  into a ring. 

 Since the periodic polynomials are inverse to the generic polynomials,
Conjecture \ref{genconj} is equivalent to the set of identities 
$$
[L^\prime(w\cdot 0)] = \sum_{z\in\hCW}
(-1)^{l(w)-l(z)}q_{A_z,A_w}(1)[Z^\prime(z\cdot 0)]
$$
for all $w\in\hCW^{\text{res,$+$}}$. (Note that the right hand side is a priori an
element in $\widehat{\DZ[X]}$.)  Using  Theorem
\ref{Lstayssimple} we see that  we have to
show, for all  $w\in\hCW^{\text{res,$+$}}$, that 
$$
\sum_{x\in\hCW^{\circ,+}} (-1)^{l(x)} h_{w_0x,w_0w}(1)\chi(x\cdot 0) =
\sum_{z\in\hCW} (-1)^{l(z)} q_{A_z,A_w}(1)[Z^\prime(z\cdot 0)]\eqno{(\ast)}
$$
in order to prove our claim (we omitted the factor $(-1)^{l(w)}$ on
both sides).

By \cite[Corollary 5.3]{MR591724} we have
$$
h_{w_0x,w_0w}(1)=p_{A_{x},A_{w}}(1),
$$
so the left hand side of equation $(\ast)$ equals
$$
\LS=\sum_{x\in\hCW^{\circ,+}} (-1)^{l(x)}p_{A_{x},A_{w}}(1)\chi(x\cdot 0).
$$
This formula is stated as the {\em $G_1T$-Lusztig conjecture} in \cite[3.3.4]{Kan}.

Now we use Kostant's formula for the Weyl character (note that the
following is an identity in $\widehat{\DZ[X]}$):
$$
\chi( x\cdot 0 )  =   \sum_{y\in\CW}
(-1)^{l(y)}e^{yx\cdot 0}\prod_{\alpha\in R^+}(1-e^{-\alpha})^{-1}.
$$
If we substitute this for  $\chi(x\cdot 0)$ in the
earlier  expression we get
$$
\LS  =
 \sum_{x\in\hCW^{\circ,+}} (-1)^{l(x)} p_{A_{x},A_{w}}(1)\sum_{y\in\CW}
(-1)^{l(y)}e^{yx\cdot 0}\prod_{\alpha\in R^+}(1-e^{-\alpha})^{-1}.
$$
Using the fact that the multiplication
$\CW\times\hCW^{\circ,+}\to\hCW^{\circ}$, $(y,x)\mapsto z=yx$, is a
bijection, that $l(z)=l(y)+l(x)$ in this situation and that
$p_{A_{yx},A_w}(1)=p_{A_x,A_w}(1)$ for all $y\in\CW$ we arrive at
$$ 
\LS =  
\sum_{z\in\hCW^{\circ}} (-1)^{l(z)} p_{A_{z},A_{w}}(1)e^{z\cdot
  0}\prod_{\alpha\in R^+}(1-e^{-\alpha})^{-1}.\eqno{(\ast\ast)}
$$

Now consider the free $\DZ[v,v^{-1}]$-module
$\CP:=\bigoplus_{A\in\SA}\DZ[v,v^{-1}]\cdot A$ with basis $\SA$ and its partial
completion
$\hCP\subset\prod_{A\in\SA}\DZ[v,v^{-1}]\cdot A$, that consists of all
elements  with support bounded from above, i.e.~ it is the set of elements $\sum
b_A\cdot A$ for which  there exists an
alcove $B$ such that $b_A\ne 0$ implies $A\preceq B$. We define the
free $\DZ$-modules $\DZ[\SA]$ with basis $\SA$ and its partial
completion $\widehat{\DZ[\SA]}$ likewise.   Evaluation at
$v=1$ gives maps $\CP\to\DZ[\SA]$ and
$\hCP\to\widehat{\DZ[\SA]}$. 

Set
$P_B:=\sum_A p_{A,B}\cdot A\in\CP\subset\hCP$ and $Q_B:=\sum_A
q_{A,B}\cdot A\in\hCP$, and define a $\DZ[v,v^{-1}]$-linear operator 
\begin{eqnarray*}
\eta\colon\hCP & \to & \hCP \\
A & \mapsto & \left(\prod_{\alpha\in R^+}
(\id+v^2 t_{-p\alpha}+v^4 t_{-2p\alpha}  +\dots)\right)(A).
\end{eqnarray*}

The following theorem relates the periodic polynomials to the generic
polynomials. It is due to Kato (cf.~\cite{MR772611}). We state it in the language of \cite{MR1445511}, cf. Theorem 6.3.
\begin{theorem} We have $\eta(P_B)=Q_B$ for all $B\in\SA$.
\end{theorem}

 If we evaluate Kato's formula at $v=1$ we get the following equation in $\widehat{\DZ[\SA]}$:
\begin{eqnarray*}
\eta(P_B)(1) & = & \sum_{A\in\SA} p_{A,B}(1)\cdot
\left(\prod_{\alpha\in R^+}( \id+ t_{-p\alpha} + t_{-2p\alpha} +\dots)\right)(A) \\
& = & \sum_{A\in\SA} q_{A,B}(1)\cdot A.
\end{eqnarray*}

 Consider the injections  $\DZ[\SA]\to\DZ[X]$ and
 $\widehat{\DZ[\SA]}\to\widehat{\DZ[X]}$ of abelian groups that
map an alcove $A$ to $(-1)^{l(w_A)}e^{w_A\cdot 0}$. We deduce from the above
the following identity in $\widehat{\DZ[X]}$:
$$
\sum_{z\in\hCW}(-1)^{l(z)} p_{A_{z},A_{w}}(1)
\left(\prod_{\alpha\in R^+}(1-e^{-p\alpha})^{-1}\right)e^{z\cdot 0}
= \sum_{z\in\hCW} (-1)^{l(z)} q_{A_z,A_w}(1) e^{z\cdot 0}.
$$

This we use on the identity $(\ast\ast)$ as follows:
\begin{eqnarray*}
\LS & = & \sum_{z\in\hCW^{\circ}} (-1)^{l(z)} p_{A_{z},A_{w}}(1)e^{z\cdot
  0}\prod_{\alpha\in R^+}(1-e^{-\alpha})^{-1} \\
& = & \left(\sum_{z\in\hCW^{\circ}} (-1)^{l(z)} p_{A_{z},A_{w}}(1)e^{z\cdot
  0}\prod_{\alpha\in
  R^+}(1-e^{-p\alpha})^{-1}\right)\prod_{\alpha\in R^+}\frac{1-e^{-p\alpha}}{1-e^{-\alpha}} \\
& = & \sum_{z\in\hCW} (-1)^{l(z)} q_{A_z,A_w}(1)
e^{z\cdot 0}\prod_{\alpha\in R^+}\frac{1-e^{-p\alpha}}{1-e^{-\alpha}} \\
& = & \sum_{z\in\hCW}(-1)^{l(z)} q_{A_z,A_w}(1) [Z^\prime(z\cdot 0)],
\end{eqnarray*}
which is what we claimed. (In step $2$ we used that $p_{A_z,A_w}\ne
0$ implies that $z\in\hCW^{\circ}$ for $w\in\hCW^{\text{res,$+$}}$.)
\end{proof}

We have now translated Lusztig's original conjecture on the characters of simple rational representations of a semisimple algebraic group into a multiplicity conjecture for the Jordan--H\"older series of standard modules over its Lie algebra. We are at the starting point of the work of Andersen, Jantzen and Soergel.
\subsection{The Brauer-Humphreys reciprocity and equivariant projective covers}\label{equivproj}
For each $\lambda\in X$ there exists a projective cover $Q^\prime(\lambda)$
of $L^\prime(\lambda)$ in $\CC$, and each
$Q^\prime(\lambda)$ admits a filtration whose successive subquotients are
isomorphic to various $Z^\prime(\mu)$'s. We denote the corresponding
multiplicity by $(Q^\prime(\lambda):Z^\prime(\mu))$. It is independent of the
particular filtration, and the following reciprocity principle was proven by Humphreys (cf.~\cite{MR0283038}):
$$
[Z^\prime(\mu):L^\prime(\lambda)]=(Q^\prime(\lambda):Z^\prime(\mu)).
$$
In order to prove Conjecture \ref{genconj} it is hence enough to calculate the numbers $(Q^\prime(y\cdot 0):Z^\prime(x\cdot
0))$ for all $x,y\in\hCW$. 

Now we need a certain deformation of the standard and the projective objects. 
For this  we identify the $k$-vector space $X^\vee\otimes_\DZ k$ with
the Lie algebra $\fh$ of the torus $T$.  Let $S=S_k(\fh)$  be the
symmetric algebra of $\fh$ and denote by $\tS$ the completion of $S$ at the maximal ideal generated by $\fh$.  The category $\CC$ then admits an {\em
  equivariant} (or {\em deformed}) version 
$\CC_{\tS}$ of certain $\fg$-$\tilde S$-bimodules (cf.~\cite{MR1272539,MR1357204}).

For each $\lambda\in X$ there is an {\em equivariant} standard module
$Z^\prime_{\tS}(\lambda)\in\CC_{\tS}$ and an indecomposable projective object
$Q^\prime_{\tS}(\lambda)$ in $\CC_{\tS}$ that maps
surjectively onto $Z^\prime_{\tS}(\lambda)$. Moreover,
$$
Z^\prime_{\tS}(\lambda)\otimes_{\tS} k\cong Z^\prime(\lambda),\quad Q^\prime_{\tS}(\lambda)\otimes_{\tS}
k=Q^\prime(\lambda).
$$
Here we used  the ring homomorphism $\tilde S\to k$ corresponding to
the unique closed point of $\Spec\, \tilde S$.

As in the non-equivariant situation, each $Q^\prime_{\tS}(\lambda)$ admits a finite filtration with subquotients
isomorphic to equivariant standard objects, and for the multiplicities
we have
$$
(Q^\prime_{\tS}(\lambda):Z^\prime_{\tS}(\mu))=(Q^\prime(\lambda):Z^\prime(\mu)).
$$

The advantage of the equivariant objects is that they can be thought
of as families  (parametrized by $\Spec\, {\tS}$) of the corresponding
non-equivariant objects. The structure of $Q^\prime_{\tS}(\lambda)$ at
generic points and on codimension one hyperplanes of $\Spec\, {\tS}$ is well
understood, which allows us to gain information on $Q^\prime_{\tS}(\lambda)$ at the
closed point corresponding to the maximal ideal $\fh\cdot {\tS}$. 

In \cite{MR1272539} this is used to construct  a ``combinatorial
category'' that is equivalent to the full subcategory of projective
objects in $\CC_{\tS}$. It is via this result that we are able to link
the category of intersection sheaves on an affine moment graph to
Lusztig's conjecture.  

\section{From intersection sheaves on affine moment graphs to $\fg$-modules}\label{sec:mom}

The most important tool for our approach towards Lusztig's conjecture
is the theory of {\em intersection sheaves on moment graphs}. Let us
recall the main definitions and results following \cite{MR1871967} and \cite{Fie05a}.

\subsection{Moment graphs}
Let $k$ be a field and $W$ a $k$-vector space.
A {\em moment graph} $\CG=(\CV,\CE,\varle,\alpha)$ over $W$ is given by the
following data:
\begin{itemize}
\item $(\CV,\CE)$ is a finite graph with set of vertices $\CV$ and set
  of edges $\CE$ (we assume that $(\CV,\CE)$ has no
  double edges and no loops),
\item ``$\varle$'' is a partial order on $\CV$ such that two elements
  $x,y\in\CV$ are comparable if they lie on a common edge and
\item $\alpha$ is a map from $\CE$ to $W\setminus\{0\}$ called the labelling.
\end{itemize}

For an edge $E=(x,y)$ we usually write $E\colon x\linie y$ or $E\colon
x\to y$ if $x< y$. If
$\alpha=\alpha(E)$ is the label of $E$ we write $E\colon
x\stackrel{\alpha}\llinie y$ or $E\colon x\xrightarrow{\alpha} y$. 

A {\em sub-moment graph}  of $\CG$ will for us always be a full
subgraph endowed with the induced partial order and the induced
labelling. So such a subgraph is determined by  a
subset of $\CV$. In particular, for each vertex $w\in\CV$ we define
the sub-moment graph $\CG_{\le w}$ of $\CG$ that is given by the set $\{x\in\CV\mid
x\le w\}$. 

\subsection{Sheaves on $\CG$}

Denote by $S=S_k(W)$ the symmetric algebra over the $k$-vector space
$W$, and endow it with the algebra-$\DZ$-grading that is given by
setting $W\subset S$ in degree $2$. In the following we will only
consider graded $S$-modules and graded homomorphisms of degree $0$.

A {\em sheaf}
$\SM=\left(\{\SM^x\},\{\SM^E\},\{\rho_{x,E}\}\right)$ on the moment graph $\CG$
consists of the following data:
\begin{itemize}
\item an $S$-module $\SM^x$ for each vertex $x\in\CV$,
\item an $S/\alpha(E)S$-module $\SM^E$ for each edge $E\in\CE$,
\item a homomorphism $\rho_{x,E}\colon\SM^x\to\SM^E$ of $S$-modules for each edge $E$
  that contains $x$ as a vertex.
\end{itemize}
Note that the sheaves on $\CG$ form an $S$-linear category $\Sh(\CG)$ in the
obvious way: a morphism $f\colon\SM\to\SN$ is given by $S$-linear maps
$f^x\colon\SM^x\to\SN^x$ and $f^E\colon \SM^E\to\SN^E$ for all
vertices $x$ and edges $E$ that are compatible with the maps $\rho^\SM$ and $\rho^\SN$.

The space of {\em global sections} of $\SM$ is defined as
$$
\Gamma(\SM):=\left\{(m_x)\in\bigoplus_{x\in\CV}\SM^x\left|\,
\begin{matrix}\rho_{x,E}(m_x)=\rho_{y,E}(m_y) \\
\text{ for all edges $E\colon x\linie
  y$}
\end{matrix}\right\}\right..
$$
For a subgraph $\CI$ of $\CG$ we have the obvious restriction
$\SM|_\CI$ of a sheaf $\SM$ to $\CI$ and we denote by $\Gamma(\CI,\SM):=\Gamma(\SM|_\CI)$ its space of sections on $\CI$.

The {\em structure algebra} of $\CG$ is 
$$
\CZ=\CZ(\CG):=\left\{(z_x)\in\bigoplus_{x\in\CV} S \left|\,
\begin{matrix} z_x\equiv z_y\mod \alpha \\
\text{ for all edges $E\colon x\stackrel{\alpha}\llinie 
  y$}
\end{matrix}\right\}\right..
$$
It is a $\DZ$-graded algebra by coordinatewise
multiplication. Similarly, 
for each sheaf $\SM$ the space $\Gamma(\SM)$ is naturally a
graded $\CZ$-module by coordinatewise multiplication, so $\Gamma$ is a functor from $\Sh(\CG)$ to
$\CZ\catmod$. 

The following property of moment graphs is crucial for our approach
and yields the only restriction on the characteristic of the field $k$ in the proof of the
multiplicity one result in \cite{Fiebig:math0607501}.

\begin{definition} We say that $\CG$ is a {\em
    Goresky-Kottwitz-MacPherson graph} (or a {\em GKM-graph}), if for
  any vertex $x$ of $\CG$ the
  labels of the edges connected to $x$ are pairwise linearly
  independent in $W$.
\end{definition}

\subsection{Intersection sheaves on a moment graph}

Let $x\in\CV$ and define 
\begin{eqnarray*}
\CV_{\delta x} & := & \{y\in\CV\mid x<y\text{ and $y$ is connected to $x$ by an edge}\},\\
\CE_{\delta x} & := & \{E\colon x\to y\in \CE\mid y\in\CV_{\delta x}\}.
\end{eqnarray*}
For a sheaf $\SM$ on $\CG$ and a vertex $x$ define $\SM^{\delta
  x}\subset\bigoplus_{E\in\CE_{\delta x}} \SM^E$ as
the image of the composition
$$
\Gamma(\{y\mid x<y\},\SM)\subset
\bigoplus_{x<y}\SM^y\stackrel{p}\to\bigoplus_{y\in\CV_{\delta x}}\SM^y\stackrel{\bigoplus
  \rho_{y,E}}\to\bigoplus_{E\in\CE_{\delta x}}\SM^E,
$$
where $p\colon
\bigoplus_{x<y}\SM^y\stackrel{p}\to\bigoplus_{y\in\CV_{\delta
    x}}\SM^y$ is the projection along the decomposition (note that
$\CV_{\delta x}\subset \{y\mid x<y\}$). Define the map
$\rho_{x,\delta x}:=(\rho_{x,E})^T_{E\in\CE_{\delta
    x}}\colon\SM^x\to\bigoplus_{E\in\CE_{\delta x}}\SM^E$.
The following theorem characterizes the {\em canonical sheaf} considered by
Braden and MacPherson in \cite{MR1871967}. We call it the {\em
  intersection sheaf}.

\begin{theorem} For each $w\in\CV$ there exists an up to isomorphism
  unique sheaf $\SB(w)=\left(\{\SB(w)^x\},\{\SB(w)^E\},\{\rho_{x,E}\}\right)$ on $\CG$ with the following properties:
\begin{enumerate}
\item $\SB(w)^x=0$ unless $x\le w$, and $\SB(w)^w\cong S$.
\item For each edge $E\colon x\to y$ the map
  $\rho_{y,E}\colon\SB(w)^y\to\SB(w)^E$ is surjective with kernel
  $\alpha(E)\cdot \SB(w)^y$, i.e.~it induces an isomorphism
  $\SB(w)^y/\alpha(E)\cdot \SB(w)^y\cong \SB(w)^E$.
\item For each $x<w$, the image of $\SB(w)^x$ under $\rho_{x,\delta x}$ is contained in
  $\SB(w)^{\delta x}$ and  the map $\rho_{x,\delta x}\colon
  \SB(w)^x\to\SB(w)^{\delta x}$ is a projective cover  in the category of
  graded $S$-modules.
\end{enumerate}
\end{theorem}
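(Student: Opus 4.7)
The plan is to construct $\SB(w)$ stalk by stalk, proceeding by downward induction on the finite poset $\{x\in\CV : x\le w\}$, with the defining conditions (1)--(3) forcing every choice up to unique isomorphism. This will yield existence and uniqueness simultaneously.

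Initialize by setting $\SB(w)^x := 0$ for $x\not\le w$ (forced by (1)) and $\SB(w)^w := S$; condition (1) also forces the edge modules and restrictions at edges incident to some $y\not\le w$ to vanish. For the inductive step, fix $x<w$ and assume $\SB(w)^y$ has been constructed as a finitely generated graded $S$-module for every $y$ with $x<y\le w$, together with every edge module $\SB(w)^E$ and restriction $\rho_{y,E}$ for edges $E$ lying strictly above $x$. For each new edge $E\colon x\linie y$ with $y\in\CV_{\delta x}$, condition (2) compels $\SB(w)^E := \SB(w)^y/\alpha(E)\SB(w)^y$ with $\rho_{y,E}$ the canonical projection. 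Using the sheaf structure already built above $x$, form the $S$-submodule $\SB(w)^{\delta x}\subset\bigoplus_{E\in\CE_{\delta x}}\SB(w)^E$ exactly as in the text preceding the theorem. Condition (3) then dictates that $\rho_{x,\delta x}\colon\SB(w)^x\to\SB(w)^{\delta x}$ be a projective cover in the category of finitely generated graded $S$-modules, and such a cover exists and is unique up to unique isomorphism because $S=S_k(W)$ is a graded polynomial algebra to which the graded Nakayama lemma applies. Finally, for each edge $E\colon x\linie y$, define $\rho_{x,E}$ as the composition of $\rho_{x,\delta x}$ with the projection $\SB(w)^{\delta x}\to\SB(w)^E$; by construction these maps are compatible with the $\rho_{y,E}$ already in place, so (2) and (3) both hold at $x$.

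To close the induction one must verify that $\SB(w)^{\delta x}$ is finitely generated: it sits inside $\bigoplus_{E\in\CE_{\delta x}}\SB(w)^E$, which is a finite direct sum of cyclic quotients of finitely generated modules, and $S$ is Noetherian. Uniqueness of $\SB(w)$ is an immediate byproduct of the same induction, since at each stage the only choice is the isomorphism class of the projective cover, which is canonical. The main obstacle is the careful handling of the projective cover in condition (3): one must stay in a graded module category in which minimal graded free covers exist and are unique up to unique isomorphism commuting with $\rho_{x,\delta x}$, and check that this class is preserved by each inductive step. Once this minimality is pinned down, the sheaf compatibilities at $x$ are tautological from the construction of $\SB(w)^{\delta x}$ as an image inside $\bigoplus_E\SB(w)^E$, and the whole construction glues consistently into a sheaf on $\CG$ satisfying (1)--(3).
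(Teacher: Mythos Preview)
Your proposal is correct and follows precisely the approach the paper itself indicates: the paper does not give a detailed proof but remarks that ``the properties stated in the theorem yield in fact an algorithm for the construction of the sheaf $\SB(w)$,'' and you have written out exactly this Braden--MacPherson algorithm (downward induction on $\{x\le w\}$, forcing edge modules by (2), then taking a graded projective cover of $\SB(w)^{\delta x}$ to obtain $\SB(w)^x$).

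One small overstatement: projective covers in the category of graded $S$-modules are unique up to isomorphism, but not up to \emph{unique} isomorphism (there are generally nontrivial automorphisms of $P$ over $M$). This does not damage your argument, since the theorem only asserts uniqueness of $\SB(w)$ up to isomorphism, and your inductive construction of an isomorphism between two candidate sheaves only requires choosing \emph{some} compatible isomorphism of projective covers at each step; you should simply drop the word ``unique'' before ``isomorphism'' in those two places.
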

By a {\em projective cover} of a graded $S$-module $M$ we mean a
graded free $S$-module $P$ together with a surjective homomorphism
$f\colon P\to M$ of graded $S$-modules such that for any homomorphism
$g\colon Q\to P$ of graded $S$-modules the following holds: If $f\circ
g$ is surjective, then $g$ is surjective.

The properties stated in the theorem yield in fact an
algorithm for the  construction of  the sheaf $\SB(w)$.
In \cite{Fie05a} we give another characterization of $\SB(w)$ as the
universal {\em flabby} sheaf on $\CG_{\le w}$.

\subsection{The affine moment graph associated to the root system}

The moment graph that we connect to Lusztig's conjecture is a certain
finite moment graph associated to the {\em affine} Weyl group. Its labels are  affine coroots. In order to introduce these, we first have to add a copy of $\DZ$ to the coweight lattice, so we set $\hX^\vee:=X^\vee\oplus \DZ$. Consider $X^\vee$ as a subset in $\hX^\vee$ via the
canonical injection, and set $\delta^\vee:=(0,1)\in\hX^\vee$. To the reflection $s_{\alpha,n}\in\hCW$ we associate the {\em affine coroot} 
$$
\alpha^\vee_n:=(\alpha^\vee,-n)=\alpha^\vee-n\delta^\vee \in \hX^\vee.
$$

Now let $k$  be an arbitrary field and set $\hV_k:=\hX^\vee\otimes_\DZ k$.
Define the moment graph
$\hCG_k=(\CV,\CE,\le,\alpha)$ over the vector space $\hV_k$
as follows. Set $\CV=\hCW$ and let ``$\varle$'' be the Bruhat order on
$\hCW$. Connect $x,y\in\hCW$ by an edge $E$ if
there is $\alpha\in R^+$, $n\in\DZ$ with $s_{\alpha,n}x=y$, and set
$\alpha(E)=\alpha^\vee_n\otimes 1\in \hV_k$.  

Recall that we denoted by $\hw_0$ the maximal element in  $\hCW^{\text{res,$-$}}$. The following is Lemma 9.1 in \cite{fiebig-2007}.
\begin{lemma}\label{GKMprop} The moment graph $\hCG_{k,\le \hw_0}$ is a
  GKM-graph if $\ch k\ge h$.
\end{lemma}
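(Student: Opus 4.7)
The plan is to verify the GKM condition locally at each vertex. Since the edges of the subgraph $\hCG_{k,\le \hw_0}$ incident to a vertex $x$ form a subset of those of $\hCG_k$ at $x$, and the label of an edge depends only on its underlying affine reflection rather than on $x$, it suffices to show that the full moment graph $\hCG_k$ is a GKM-graph under the hypothesis $\ch k \ge h$.

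First I would unpack the labels: distinct edges at $x$ correspond to distinct pairs $(\alpha,n)\in R^+\times\DZ$, each carrying the label $(\alpha^\vee-n\delta^\vee)\otimes 1\in\hV_k$. Writing $\hV_k=(X^\vee\otimes_\DZ k)\oplus k\delta^\vee$, if two such labels were proportional, say $\alpha^\vee-n\delta^\vee=c(\beta^\vee-m\delta^\vee)$ for some $c\in k$, then equating components would give $\alpha^\vee=c\beta^\vee$ in $V_k:=X^\vee\otimes_\DZ k$ and $n=cm$ in $k$. The GKM property is therefore reduced to a linear-independence statement about ordinary coroots in $V_k$.

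The crux is then to prove, for $\ch k\ge h$, the two facts: (a) every coroot $\alpha^\vee$ is non-zero in $V_k$, and (b) any two distinct positive coroots $\alpha^\vee,\beta^\vee$ are $k$-linearly independent in $V_k$. Granting these, the lemma follows at once: if $\alpha\ne\beta$ then (b) rules out any proportionality, while if $\alpha=\beta$ then (a) forces $c=1$, hence $n=m$, contradicting the distinctness of the pairs $(\alpha,n)$ and $(\beta,m)$.

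The main obstacle is therefore the root-theoretic input (a) and (b), which amounts to the assertion that $p\ge h$ is a non-torsion (``very good'') prime of the root system. A standard way to obtain this is to observe that the torsion primes of an irreducible root system, as tabulated in Bourbaki, are all strictly less than the Coxeter number $h$; for such $p$ the reduction map $X^\vee\to X^\vee\otimes_\DZ\DF_p$ preserves linear independence of coroots, since any $\DZ$-linear dependence among two distinct positive coroots has elementary divisors supported on the torsion primes. Alternatively one can argue combinatorially: the coefficients of a positive coroot in the simple-coroot basis are non-negative integers bounded by a function of $h$, and with a little care one checks that no $2\times 2$ minor formed from two distinct positive coroots can be divisible by a prime $p\ge h$. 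Once (a) and (b) are in hand, the plan above closes the proof.
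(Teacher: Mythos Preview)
There is a genuine gap in your reduction step. You claim it suffices to show that the full graph $\hCG_k$ is a GKM-graph, but in positive characteristic the full graph is \emph{never} GKM: at any vertex $x$ and for any $\alpha\in R^+$, the two edges to $s_{\alpha,0}x$ and to $s_{\alpha,p}x$ carry the labels $\alpha^\vee\otimes 1$ and $(\alpha^\vee-p\delta^\vee)\otimes 1$, and these are equal in $\hV_k$ since $p\cdot 1=0$ in $k$. Your own case analysis reproduces exactly this failure: when $\alpha=\beta$ you deduce $c=1$ and then ``$n=m$'', but that equality holds only in $k$, i.e.\ $p\mid n-m$; it does not force $n=m$ in $\DZ$, so the pairs $(\alpha,n)$ and $(\alpha,m)$ can perfectly well be distinct.

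The restriction to the finite interval $\{y\mid y\le\hw_0\}$ is therefore not a convenience but the essential ingredient. What is actually required is a bound: whenever $x\le\hw_0$ and $s_{\alpha,n}x\le\hw_0$, the integers $n$ that can occur (for fixed $\alpha$) lie in a range of length strictly less than $p$, so that $n\equiv m\pmod p$ really does force $n=m$. Establishing such a bound --- and the analogous control needed for pairs with $\alpha\ne\beta$ --- is precisely where the hypothesis $p\ge h$ enters, via the alcove combinatorics of the Bruhat interval below $\hw_0$. The present paper does not carry out this argument but cites it as Lemma~9.1 of \cite{fiebig-2007}. Your invocation of ``torsion primes'' for claim~(b) is also not quite the right mechanism, though that is a secondary matter; the decisive missing idea is the bound on the affine parameters $n$ coming from the finite interval.
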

\subsection{Multiplicities for intersection sheaves on
  $\hCG_k$}  
Let $M=\bigoplus_{n\in\DZ} M_n$ be a $\DZ$-graded $S$-module, and denote
by $M[l]$ the graded module obtained by a shift in the grading, i.e.~
such that $M[l]_n=M_{n+l}$. For a graded free $S$-module
$M\cong\bigoplus_i S[l_i]$ of finite rank set 
$$
\grk\, M:=\sum_i v^{l_i}\in\DN[v,v^{-1}].
$$

For each field $k$ and each $w\in\hCW$ we now have the corresponding
intersection sheaf $\SB_{k}(w)$ on $\hCG_k$. By construction, each of
its stalks $\SB_k(w)^x$ is a graded free $S(\hV_k)$-module of
finite rank. The following is a natural generalization of the affine
Kazhdan-Lusztig conjecture. It appears in \cite{Fie05b}.
\begin{conjecture}\label{momconj} Let $w\in\hCW$ and suppose that $k$ is such that $\hCG_{k,\le w}$ is a
  GKM-graph. Then we have
$$
\grk\, \SB_k(w)^x=v^{l(x)-l(w)}h_{x,w}
$$
for all $x\le w$.
\end{conjecture}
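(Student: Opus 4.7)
The plan is to prove Conjecture \ref{momconj} by induction along the Bruhat order, guided directly by the three defining properties of $\SB_k(w)$. The base case $x=w$ is immediate from property (1): $\SB_k(w)^w\cong S$ gives $\grk\,\SB_k(w)^w=1=v^{0}h_{w,w}$. For $x<w$, the stalks $\SB_k(w)^y$ with $y>x$ are known by induction, property (2) then pins down each edge module $\SB_k(w)^E$ and hence the target $\SB_k(w)^{\delta x}\subset\bigoplus_{E\in\CE_{\delta x}}\SB_k(w)^E$, and property (3) determines $\SB_k(w)^x$ as its graded projective cover. The task thus reduces to showing that the graded ranks so produced obey the recursion that characterises the self-dual basis element $\ul H_w$ in $\hbH$ from Theorem \ref{self-dual elts}.

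To organise this recursion, I would categorify the affine Hecke algebra by introducing push-pull (``translation'') functors $\theta_s$ on $\Sh(\hCG_k)$, one for each simple affine reflection $s\in\hCS$; the expected behaviour is that $\theta_s\SB_k(w)$ decomposes as a direct sum of shifted intersection sheaves whose graded ranks realise right multiplication by $\ul H_s$ in $\hbH$. Combined with the self-duality of $\SB_k(w)$ (reflecting the symmetry of $\ul H_w$ under the Hecke-algebra duality $d$), the formula would then follow by realising $\SB_k(w)$ as the unique indecomposable summand of $\theta_{s_n}\cdots\theta_{s_1}\SB_k(e)$ supported on $\hCG_{k,\le w}$ for a reduced expression $w=s_1\cdots s_n$. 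This is the moment-graph analogue of Soergel's bimodule strategy, and is what forges the link to the $\fg$-module picture of Section \ref{sec:gmod}.

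For $k=\DC$ the conjecture is then an immediate consequence of the Braden--MacPherson identification of \cite{MR1871967}, which realises the stalks $\SB_\DC(w)^x$ as stalks of the equivariant intersection cohomology of the affine Schubert variety associated to $w$; the Kazhdan--Lusztig theorem for affine Kac--Moody groups (Kashiwara--Tanisaki \cite{MR1317626}) then identifies these stalks with the prescribed polynomials. From here the statement propagates to every field $k$ whose characteristic lies outside a finite set of ``bad'' primes, by spreading the algorithmic construction of $\SB_k(w)$ out over $\Spec\,\DZ$ and observing that it involves only finitely many linear-algebraic operations (kernels, images, projective covers), so the graded ranks of the stalks are locally constant on $\Spec\,\DZ$ away from a divisor whose support depends on $w$.

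The hard part is the genuinely modular case, where the ``bad'' primes for a given $w$ are not known in advance and cannot at present be bounded uniformly. The subtle step in the algorithm is again property (3): the graded projective cover can jump in rank on reduction modulo $p$, because a generator existing over $\DQ$ may fail to lift modulo $p$. Controlling precisely which primes produce such jumps appears to be of the same depth as Lusztig's conjecture itself; the only general modular result I would expect to extract by the above strategy is the multiplicity-one case, in which $\SB_k(w)^x$ is free of rank one, handled by the characterisation of the $p$-smooth locus of $\hCG_{k,\le w}$ for $p\ge h$ proved in \cite{Fiebig:math0607501}.
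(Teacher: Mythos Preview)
The statement is a \emph{conjecture}; the paper does not prove it in general. What the paper establishes are the partial results recorded as Theorems \ref{pbigenough} and \ref{multone}: the conjecture holds for $\ch k=0$ (hence for almost all $p$ by the algebraicity and finiteness of the Braden--MacPherson construction), and the multiplicity-one case holds whenever $\hCG_{k,\le w}$ is GKM. Your proposal lands at exactly these same partial results, and you rightly concede in your final paragraph that the general modular case is open and of the same depth as Lusztig's conjecture itself.

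It is still worth naming the precise gap in the strategy of your first two paragraphs. The induction you describe is the Braden--MacPherson algorithm: it \emph{constructs} $\SB_k(w)$ but does not by itself compute the stalk ranks. You then invoke translation functors and self-duality. Both ingredients exist and behave as claimed---$\theta_s$ categorifies right multiplication by $\ul H_s$ at the level of characters, and the character of $\SB_k(w)$ is $d$-invariant---but self-duality alone does not pin down $\ul H_w$: Theorem \ref{self-dual elts} also imposes the degree condition $\ul H_w\in H_w+\sum_x v\DZ[v]H_x$. When you split $\theta_{s_n}\cdots\theta_{s_1}\SB_k(e)$ into indecomposables, the graded multiplicities of the lower summands $\SB_k(y)$ for $y<w$ are not determined by the Hecke-algebra recursion, and nothing forces the remaining top summand to satisfy the degree bound. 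Over $\DC$ the missing input is supplied by the decomposition theorem; in positive characteristic it is exactly the content of Conjecture \ref{momconj}, so the argument is circular there.

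A minor correction: for $k=\DC$ the geometric input identifying intersection-cohomology stalks with Kazhdan--Lusztig polynomials is (the affine, equivariant version of) \cite{MR573434}, not \cite{MR1317626}; the latter proves the Kazhdan--Lusztig conjecture for category $\CO$ at negative level and enters the overall story at a different point.
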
 

If $\ch k=0$ then $\hCG_{k,\le w}$ is a GKM-graph for each $w$. 
For the applications that we have in mind, only the finitely many
objects $\SB_k(w)$ with $w\in\hCW^{\text{res,$-$}}$  play a role.  Lemma
\ref{GKMprop} ensures that for all those $w$ the graph $\hCG_{k,\le w}$
is a GKM-graph if $\ch k\ge h$.

In the following we list the proven instances of Conjecture
\ref{momconj}. Later we recall the main steps in the proof of the fact
that Conjecture \ref{momconj} implies Conjecture \ref{genconj}, and
hence Conjecture \ref{Lconj} (cf.~\cite{fiebig-2007}).

\subsection{From sheaves on the affine flag variety to intersection
  sheaves on the graph}

Suppose that $k=\DC$. In \cite{MR1871967} it is shown that the
intersection sheaves on $\hCG_\DC$ encode the torus-equivariant intersection cohomologies
of Schubert varieties in the complex affine flag variety associated
to the simply connected complex group $G^\vee$ with root system $R^\vee$. By an equivariant version of a  theorem of Kazhdan and Lusztig (cf.~\cite{MR573434}) the
graded ranks of the stalks of the intersection cohomology complexes
are given by the corresponding affine Kazhdan-Lusztig
polynomials. From these two results we can deduce Conjecture
\ref{momconj} in the case $k=\DC$. By the algebraicity and the
finiteness of the Braden--MacPherson algorithm  we can then deduce the
conjecture  for each fixed
$w$ for almost all characteristics. Hence we have the following:
\begin{theorem}\label{pbigenough} 
\begin{enumerate}
\item 
If $\ch k=0$, then Conjecture $\ref{momconj}$ holds.
\item  For each  $w\in\hCW$ the Conjecture $\ref{momconj}$
  holds if  $\ch k$ is big
  enough. (Here the notion of big enough depends on $w$.) 
\end{enumerate}
\end{theorem}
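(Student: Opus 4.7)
The plan is to prove part (1) by a geometric argument over $\DC$, and then to lift the result to positive characteristic by exploiting the combinatorial, algorithmic nature of the Braden-MacPherson construction of $\SB_k(w)$.

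For part (1), I would identify $\SB_\DC(w)$ with the restriction of the $T^\vee$-equivariant intersection cohomology complex on the affine Schubert variety $X_w \subset \Fl^\vee$ associated to the simply connected complex group $G^\vee$ with root system $R^\vee$. The moment graph $\hCG_\DC$ is precisely the moment graph of the $T^\vee$-action on $\Fl^\vee$: its vertices correspond to $T^\vee$-fixed points, its edges to one-dimensional $T^\vee$-orbits, and its labels to the characters through which $T^\vee$ acts on those orbits. The main theorem of \cite{MR1871967} then shows that the data assembling the stalks $\IH^*_{T^\vee}(X_w)_x$ for $x \le w$, together with the restriction maps to the equivariant cohomology of the one-dimensional orbit closures, satisfies the three defining properties of $\SB_\DC(w)$ and is hence canonically isomorphic to it. Combining this with an equivariant enhancement of the Kazhdan-Lusztig theorem \cite{MR573434}, which identifies the graded rank of the stalk of the (equivariant) IC-sheaf at $x$ with the affine Kazhdan-Lusztig polynomial $P_{x,w}$, and then translating into the normalization $h_{x,w} = v^{l(w)-l(x)} P_{x,w}$ used in Section \ref{poly}, yields exactly $\grk \SB_\DC(w)^x = v^{l(x)-l(w)} h_{x,w}$.

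For part (2), I would fix $w \in \hCW$ and use that the Braden-MacPherson algorithm produces $\SB_k(w)$ in finitely many inductive steps along the Bruhat order: one starts with $\SB_k(w)^w = S$ and, descending the order, at each vertex $x < w$ forms the image in $\SB_k(w)^{\delta x}$ of the data already constructed at strictly larger vertices and then takes a projective cover in the category of graded $S$-modules. Only the finitely many vertices and edges of the finite subgraph $\hCG_{k, \le w}$ are involved, and each projective-cover step is a finite-dimensional graded linear algebra problem over the symmetric algebra $S(\hV_k)$. Since the labels $\alpha^\vee_n \otimes 1$ are defined over $\DZ$, the entire construction descends to a universal one over a suitable localization $\DZ[1/N]$ with $N$ depending only on $w$. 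For any field $k$ of characteristic coprime to $N$ (and at least $h$, so that Lemma \ref{GKMprop} supplies the GKM condition when $w \in \hCW^{\text{res,$-$}}$), the sheaf $\SB_k(w)$ is obtained from this universal object by base change, and its graded ranks coincide with those computed over $\DC$; combined with part (1), this establishes the claim.

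The hard part, and the place where the ``big enough'' hypothesis enters, is controlling the behaviour of the projective-cover step under reduction modulo $p$. Over a general field the size of a minimal generating set of a graded module can jump when certain minors of transition matrices vanish modulo $p$; one must check that, for a fixed $w$, only finitely many primes can cause such a jump in the finitely many projective-cover computations required to build $\SB_k(w)$. Once that finite bad set is excluded, the algorithm produces output with the same graded ranks as in characteristic zero, and part (1) finishes the argument. Note that the number $N$ of bad primes depends on $w$, so this strategy gives Conjecture \ref{momconj} for each fixed $w$ at all sufficiently large characteristics, but not a uniform bound.
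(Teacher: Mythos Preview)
Your proposal is correct and follows essentially the same approach as the paper: for part (1) you invoke Braden--MacPherson's identification of $\SB_\DC(w)$ with the equivariant intersection cohomology of the corresponding affine Schubert variety in $\Fl^\vee$ together with the (equivariant) Kazhdan--Lusztig theorem, and for part (2) you appeal to the algebraicity and finiteness of the Braden--MacPherson algorithm to deduce that only finitely many primes can spoil the characteristic-zero ranks. The paper's own argument is in fact terser than yours; your discussion of the projective-cover step and of base change from a universal model over $\DZ[1/N]$ makes explicit exactly what the paper summarizes as ``algebraicity and finiteness.''
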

\subsection{The multiplicity one result}
One of the advantages of the moment graph picture over the geometric
picture is that one can give a characterization of its $p$-smooth locus:

\begin{theorem}[\cite{Fiebig:math0607501}]\label{multone} Fix
  $w\in\hCW$ and suppose that $k$ is such that  $\hCG_{k,\le w}$ is a GKM-graph. Then for all $x\le w$ the following are equivalent:
\begin{enumerate}
\item $\grk\, \SB_k(w)^x=1$.
\item $h_{x,w}=v^{l(w)-l(x)}$.
\end{enumerate}
\end{theorem}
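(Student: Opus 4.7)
The plan is to prove the equivalence by downward induction on the Bruhat order inside $[e,w]$, starting from the trivial case $x = w$ where $\SB_k(w)^w \cong S$ has graded rank $1$ and $h_{w,w}=1$.

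By the defining characterization of $\SB_k(w)$ recalled above, the stalk $\SB_k(w)^x$ is a graded projective cover in $S\catmod$ of $\SB_k(w)^{\delta x}$; consequently $\grk\,\SB_k(w)^x$ coincides with the graded Poincar\'e series of $\SB_k(w)^{\delta x}\otimes_S k$. The question therefore reduces, under each of the two hypotheses, to controlling the minimal graded generators of $\SB_k(w)^{\delta x}$.

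For the direction $(2)\Rightarrow(1)$ I would invoke two classical inputs. First, the Kazhdan--Lusztig combinatorics implies that $h_{x,w}=v^{l(w)-l(x)}$ forces $h_{y,w}=v^{l(w)-l(y)}$ for every $y$ with $x\le y\le w$ (rational smoothness propagates upward along the interval). The inductive hypothesis then identifies each stalk $\SB_k(w)^y$ with $y>x$ as a rank-one graded free $S$-module, and hence each $\SB_k(w)^E$ for $E$ above $x$ as a rank-one $S/\alpha(E)$-module. Second, the Carrell--Peterson criterion identifies the set $\{y\in\CV_{\delta x}\mid y\le w\}$ as having exactly $l(w)-l(x)$ elements. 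Combining these with the GKM hypothesis --- the labels $\alpha(E)$ at $x$ are pairwise linearly independent in $\hV_k$ --- I would analyze the composition
$$
\Gamma(\{y>x\},\SB_k(w))\longrightarrow\bigoplus_{E\in\CE_{\delta x}}\SB_k(w)^E
$$
and show that its image $\SB_k(w)^{\delta x}$ is cyclic of minimal degree $2(l(w)-l(x))$. Its graded projective cover is then $S[l(w)-l(x)]$, of graded rank $v^{l(w)-l(x)}$, as required.

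For $(1)\Rightarrow(2)$ I would argue inductively via a comparison of recursions: the Braden--MacPherson algorithm produces a termwise lower bound $\grk\,\SB_k(w)^x\ge v^{l(x)-l(w)}h_{x,w}$ in $\DN[v,v^{-1}]$, obtained by matching the recursive computation of $\SB_k(w)^{\delta x}$ against the Kazhdan--Lusztig recursion for $h_{x,w}$. If the left-hand side equals $1$, this inequality forces $h_{x,w}=v^{l(w)-l(x)}$.

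The principal obstacle is the cyclicity step in $(2)\Rightarrow(1)$. The GKM hypothesis provides only \emph{pairwise} linear independence of the $\alpha(E)$ at $x$, not total independence of the $l(w)-l(x)$ labels, so cyclicity cannot be extracted from a na\"ive generic-position argument. One must instead exploit the additional relations imposed by the global-section condition on $\{y\mid x<y\le w\}$ --- encoded by the edges \emph{above} $x$ within this set --- to construct a canonical generator of $\SB_k(w)^{\delta x}$ and prove that it alone generates the whole image. Carrying out this combinatorial analysis inside the Bruhat interval $[x,w]$, using only pairwise independence of labels, is the technical heart of the proof.
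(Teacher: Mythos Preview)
This theorem is not proved in the present paper: it is quoted, with attribution, from \cite{Fiebig:math0607501}, and only the statement appears here. There is therefore no in-paper argument against which to compare your attempt.

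On the substance of your outline: the strategy for $(2)\Rightarrow(1)$ is the natural one, and you correctly isolate the crux --- cyclicity of $\SB_k(w)^{\delta x}$ when the edge labels at $x$ are only \emph{pairwise} linearly independent --- but you do not actually prove it; you merely name it as ``the technical heart.'' That step is in fact the main content of \cite{Fiebig:math0607501} and requires a substantial structural analysis of sections over Bruhat intervals; it does not fall out of Carrell--Peterson plus the GKM condition alone. There is also a grading slip: under your inductive hypothesis each $\SB_k(w)^y$ with $y>x$ is isomorphic to $S$ generated in degree $0$, so the candidate cyclic generator of $\SB_k(w)^{\delta x}$ sits in degree $0$ and its projective cover is $S$ with $\grk=1$, not $S[l(w)-l(x)]$ with $\grk=v^{l(w)-l(x)}$. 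For $(1)\Rightarrow(2)$, the coefficientwise inequality $\grk\,\SB_k(w)^x \ge v^{l(x)-l(w)}h_{x,w}$ is true but is not a byproduct of a naive ``matching of recursions'': one needs to know that the Braden--MacPherson stalks over $k$ dominate those over a characteristic-zero field (equivalently, that the $\SB_k(w)$ admit filtrations by shifts of characteristic-zero intersection sheaves with nonnegative multiplicities), and this is itself a theorem rather than a formality.
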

The above theorem proves the multiplicity one case of Conjecture
\ref{momconj} for all relevant fields $k$.

We now apply the above results to Lusztig's conjecture.

\subsection{From intersection sheaves to $\fg$-modules}

Denote by $\CB_k^{\circ,-}\subset\Sh(\hCG_k)$ the full subcategory of
objects isomorphic to a direct sum of various intersection sheaves
$\SB_k(w)$ with $w\in\hCW^{\circ,-}$. Denote by
$\CR_k\subset\CC_{\tS}$ the full subcategory of projective objects. Here
we cite the main result of \cite{fiebig-2007}:

\begin{theorem}\label{Mth}
Suppose that $\ch
k> h$.  Then there exists an additive functor 
$$
\Phi\colon \CB_k^{\circ,-}\to
  \CR_k
$$ 
such that 
$$
\left(\Phi(\SB):Z^\prime_{\tilde S}(x\cdot 0)\right)=(\grk\, \SB^x)(1)
$$
for all $\SB\in\CB_k^{\circ,-}$ and $x\in\hCW$.
\end{theorem}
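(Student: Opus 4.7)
The plan is to construct $\Phi$ by composing two functors: first, taking global sections $\Gamma$ to obtain graded $\CZ(\hCG_k)$-modules from objects of $\CB_k^{\circ,-}$, and second, identifying the resulting modules, after completing at the ideal $\fh\cdot S$, with projective objects in $\CC_{\tS}$ via the Andersen--Jantzen--Soergel combinatorial description recalled in Section \ref{equivproj}. The structure algebra $\CZ(\hCG_k)$ is built from the affine coroots labelling the edges of $\hCG_k$, and the AJS combinatorial category $\CAJS$ that governs $\CR_k$ consists of modules equipped with exactly the same kind of congruence/wall data, so after completion the two frameworks should be matched.

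First I would verify that $\Gamma\colon \Sh(\hCG_k)\to \CZ\catmod$ sends each intersection sheaf $\SB_k(w)$ with $w\in\hCW^{\text{res},-}$ to a module whose completion $\Gamma(\SB_k(w))\otimes_S\tS$ satisfies the universal property of the projective cover $Q^\prime_{\tS}(w\cdot 0)$ inside $\CAJS$. The defining properties (1)--(3) in the characterization of $\SB_k(w)$, when read through $\Gamma$, translate into (a) the correct support in the Bruhat order, (b) the correct local behaviour along each edge (compatible with the AJS relations coming from reflections $s_{\alpha,n}$), and (c) a projectivity/lifting condition along the maps $\rho_{x,\delta x}$ that mirrors the minimality of the projective cover. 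This is where the hypothesis $\ch k>h$ enters: via Lemma \ref{GKMprop}, $\hCG_{k,\le\hw_0}$ is a GKM-graph, which guarantees that the stalks $\SB_k(w)^x$ are free of finite rank and that the local algebra is well-behaved enough for the matching to go through. Additivity of $\Phi$ is automatic from the additivity of $\Gamma$, of completion, and of the AJS equivalence.

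Next I would extract the multiplicity formula. Under Soergel's equivalence, an object of $\CR_k$ admits a filtration by equivariant standards $Z^\prime_{\tS}(x\cdot 0)$, and the multiplicities $(Q^\prime_{\tS}(y\cdot 0):Z^\prime_{\tS}(x\cdot 0))$ can be read off combinatorially from the ``stalk at $x$'' of the corresponding AJS object. On the moment graph side, the standard filtration on $\Gamma(\SB)$ induced by filtering the vertex set by the partial order has $x$-th subquotient governed by the stalk $\SB^x$, which is graded free over $S$. Hence the multiplicity of $Z^\prime_{\tS}(x\cdot 0)$ in $\Phi(\SB)$ equals the total rank of $\SB^x$, i.e.\ the value of the graded rank $\grk\,\SB^x$ at $v=1$. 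This gives the claimed identity $(\Phi(\SB):Z^\prime_{\tS}(x\cdot 0))=(\grk\,\SB^x)(1)$, and by additivity it suffices to establish it on each indecomposable $\SB_k(w)$.

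The principal obstacle is the matching in step two: bridging the two a priori different characterizations of ``projective cover'', the sheaf-theoretic one via the map $\rho_{x,\delta x}\colon \SB(w)^x\to \SB(w)^{\delta x}$, and the Lie-theoretic one for $Q^\prime_{\tS}(w\cdot 0)$ inside $\CC_{\tS}$ as interpreted by Andersen--Jantzen--Soergel. One must show that the image of $\Gamma(\SB_k(w))$ under completion genuinely lies in $\CR_k$ (not merely in some larger category of $\tS$-modules), and that the induced structure realizes the AJS combinatorics faithfully. Once this identification is secured, the remainder of the theorem follows from formal properties of Soergel's equivalence together with the standard-filtration bookkeeping sketched above.
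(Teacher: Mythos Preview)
Your overall architecture---take global sections $\Gamma$, then pass to the Andersen--Jantzen--Soergel world---matches the paper's, which factors $\Phi$ as
\[
\CB_k^{\circ,-}\xrightarrow{\Gamma}\CH^{\circ,-}_k\xrightarrow{\Psi}\CM_k\xrightarrow{\DV^{-1}}\CR_k.
\]
But the mechanism you propose for the middle step differs from the paper's, and your version contains an overclaim that would short-circuit the entire program.

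The overclaim: you set out to show that (the completion of) $\Gamma(\SB_k(w))$ ``satisfies the universal property of the projective cover $Q^\prime_{\tS}(w\cdot 0)$''. Theorem~\ref{Mth} does not assert this, and indeed it cannot: knowing that $\Phi(\SB_k(w))\cong Q^\prime_{\tS}(w\cdot 0)$ is equivalent to the very conjecture the paper is reducing to the moment-graph side. In the application following the theorem the paper only argues that $\Phi(\SB_k(w))$ \emph{contains} $Q^\prime_{\tS}(w\cdot 0)$ as a direct summand and then extracts an inequality. So your plan of reading the Braden--MacPherson axioms (1)--(3) as a translation of the projective-cover property of $Q^\prime_{\tS}(w\cdot 0)$ cannot succeed without assuming what you ultimately want to prove.

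What the paper does instead: the functor $\Psi\colon\CH_k^{\circ,-}\to\CM_k$ is built not by matching characterizations of projective covers but by matching \emph{translation functors}. Both $\CH_k^{\circ,-}$ (via \cite{Fie05b}) and the AJS category $\CM_k$ are generated from a unit object by repeatedly applying wall-crossings, and $\Psi$ is defined so as to intertwine these. This guarantees that $\Phi(\SB_k(w))$ is projective and carries the correct Verma-flag multiplicities, without ever claiming indecomposability. Note also that the base change is not just completion at $\fh\cdot S$: one first kills $\delta^\vee$ via the projection $\hX^\vee\to X^\vee$, and the hypothesis $\ch k>h$ enters through the AJS equivalence $\DV\colon\CR_k\xrightarrow{\sim}\CM_k$, not merely through the GKM property (which already holds for $\ch k\ge h$). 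Your multiplicity-extraction paragraph is fine once the functor is in place, but it is downstream of the real work.
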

Let me point out a small, but important detail here: While
$\CB_k^{\circ,-}$ is a $\hS$-linear category, $\CR_k$ is
$\tS$-linear (recall that we identified $X^\vee\otimes_\DZ k$ with the Lie
algebra $\fh$). The
projection $X^\vee\oplus\DZ\to X^\vee$ on the first coordinate induces
a map $\hS\to S=S(\fh)\to \tS$ that is implicit in the functor $\Phi$.

We give a short sketch of the construction of $\Phi$. Suppose that $\CG$ is an
arbitrary GKM-graph with structure algebra $\CZ$. Denote by
$\CB\subset\Sh(\CG)$ the full subcategory of objects isomorphic to a
direct sum of intersection sheaves. 

In \cite{Fie05a} we
defined the  additive subcategory $\CV^{\text{ref}}\subset\CZ\catmod$ of {\em
  reflexive} modules, and we showed that the global sections of
intersection sheaves are reflexive. Hence we get a functor
$\Gamma\colon\CB\to\CV^{\text{ref}}$. Then we defined an exact structure on
$\CV^{\text{ref}}$ and showed that the image of $\Gamma$ is the full
subcategory of projective objects in $\CV^{\text{ref}}$. 

Suppose now that the moment graph is the graph $\hCG_{k,\le w}$
associated to the coroot system and some $w\in\hCW$. In \cite{Fie05b} we gave an alternative construction  of the projective
objects in $\CV^{\text{ref}}$ via translation
functors. This shows that each projective
object admits a {\em Verma flag}, i.e.~ belongs to some additive and
exact subcategory $\CV^{\text{Verma}}\subset\CV^{\text{ref}}$. Hence we can
construct our category $\Gamma(\CB_{k}^{\circ,-})$ via translation
functors. In addition, we proved that $\CB_{k}^{\circ,-}$ is equivalent to a certain subcategory of the
category of bimodules studied
by Soergel in \cite{MR2329762}. 

Set
$$
\CH_k^{\circ,-}:=\Gamma(\CB_{k}^{\circ,-})\subset\CZ_k\catmod.
$$
In
\cite{fiebig-2007} we constructed a functor $\Psi$ from 
$\CH_k^{\circ,-}$ to the combinatorial category $\CM_k$ that appears in the work of
Andersen, Jantzen and Soergel (cf.~\cite{MR1272539}). For this we use
the fact that both categories are constructed from a unit object by
repeatedly applying translation functors. One of the main results in \cite{MR1272539} is an equivalence $\DV\colon\CR_k\xrightarrow{\sim}\CM_k$ under the assumption $p>h$. 
Now our functor $\Phi$ is the composition 
$$
\Phi\colon
\CB_k^{\circ,-}\xrightarrow{\Gamma}\CH^{\circ,-}_k\xrightarrow{\Psi}\CM_k\xrightarrow{\DV^{-1}}\CR_k.
$$ 
This finishes our sketch of the construction of $\Phi$.

We are now going to apply  Theorem \ref{Mth} to Lusztig's conjecture. Fix
$w\in\hCW^{\circ,-}$. Then $\hCG_{k,\le w}$ is a GKM-graph if $\ch k\ge
h$ by Lemma \ref{GKMprop}. Suppose that Conjecture \ref{momconj} holds for
the pair $(x,w)$, i.e.~ suppose that we have $\grk\,
\SB_k(w)^x=v^{l(x)-l(w)}h_{x,w}$. Then 
consider the object $\Phi(\SB_k(w))\in\CR_k$. It surjects
onto the standard module $Z^\prime_{\tilde S}(w\cdot 0)$ since
$(\Phi(\SB_k(w))\colon Z^\prime_{\tilde S}(\mu))$ is zero for $\mu\not\ge
w\cdot 0$, and is one for $\mu=w\cdot 0$. Hence $\Phi(\SB_k(w))$
contains $Q^\prime_{\tilde S}(w\cdot 0)$ as a direct summand. 

We deduce from Theorem
\ref{Mth} that
$$
\left(Q^\prime_{\tilde S}(w\cdot 0):Z^\prime_{\tilde S}(x\cdot 0)\right) \le
\left(\Phi(\SB_k(w)):Z^\prime_{\tilde S}(x\cdot 0)\right)=h_{x,w}(1).
$$
From the results explained in  Section \ref{equivproj} and the
reciprocity principle we get
$$
[Z^\prime(x\cdot 0):L^\prime(w\cdot 0)] \le h_{x,w}(1)=p_{A_{w_0x},A_{w_0w}}(1).
$$
One can show inductively (cf.~\cite{MR1272539}, pp.238/239) that Conjecture
\ref{genconj} provides a {\em lower} bound on the respective
multiplicities, so we get:

\begin{theorem} Conjecture $\ref{momconj}$ implies Conjecture
  $\ref{genconj}$ and hence Lusztig's
  Conjecture $\ref{Lconj}$.
\end{theorem}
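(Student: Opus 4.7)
The plan is to combine Theorem \ref{Mth} with the Brauer--Humphreys reciprocity of Section \ref{equivproj} and a matching lower bound due to Andersen, Jantzen and Soergel. By the theorem of the previous section it suffices to show that Conjecture \ref{momconj} implies Conjecture \ref{genconj}, and using the $pX$-symmetry of $\CC$ the task reduces further to computing $[Z^\prime(x\cdot 0):L^\prime(w\cdot 0)]$ for $w\in\hCW^{\text{res,$-$}}$. Every such $w$ satisfies $w\le \hw_0$ and hence lies in $\hCW^{\circ,-}$, so $\SB_k(w)$ belongs to $\CB_k^{\circ,-}$ and is in the domain of the functor $\Phi$ of Theorem \ref{Mth}.

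The first step is the upper bound. I would apply $\Phi$ to $\SB_k(w)$; granting Conjecture \ref{momconj}, the formula of Theorem \ref{Mth} specializes to
$$
\bigl(\Phi(\SB_k(w)):Z^\prime_{\tS}(x\cdot 0)\bigr)=(\grk\,\SB_k(w)^x)(1)=h_{x,w}(1)
$$
for $x\le w$ and zero otherwise. Since $\SB_k(w)^w\cong S$, the standard $Z^\prime_{\tS}(w\cdot 0)$ appears with multiplicity one in a Verma flag while every other standard appearing has label in the linkage class strictly above $w\cdot 0$; hence $\Phi(\SB_k(w))$ surjects onto $Z^\prime_{\tS}(w\cdot 0)$ and $Q^\prime_{\tS}(w\cdot 0)$ splits off as a direct summand. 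Base-changing along $\tS\to k$ and applying the reciprocity principle of Section \ref{equivproj} then yields
$$
[Z^\prime(x\cdot 0):L^\prime(w\cdot 0)]\le h_{x,w}(1)=p_{A_{w_0x},A_{w_0w}}(1),
$$
where the last equality is the identity $h_{w_0x,w_0w}(1)=p_{A_x,A_w}(1)$ already used in the proof of the previous theorem.

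The matching lower bound I would take from the inductive argument of Andersen, Jantzen and Soergel (cf.~\cite{MR1272539}, pp.~238--239), which shows that the candidate formula of Conjecture \ref{genconj} provides a \emph{lower} bound for the Jordan--H\"older multiplicities as soon as the compatibility relations forced by the translation functors are satisfied. The two bounds together force equality for every pair $(x,w)$ with $w\in\hCW^{\text{res,$-$}}$, and $pX$-equivariance propagates the equality to all of $\hCW$. This establishes Conjecture \ref{genconj}, and the theorem of the previous section converts it into Lusztig's Conjecture \ref{Lconj}. Given the two deep inputs---Theorem \ref{Mth} and the AJS lower bound---the present argument is mainly assembly; the point to verify with some care is that the base-ring mismatch between the $\hS$-linear source category $\CB_k^{\circ,-}$ and the $\tS$-linear target $\CR_k$, reconciled via the map $\hS\to S\to\tS$ implicit in the composition $\Gamma,\Psi,\DV^{-1}$, does not disturb the identification of standard-filtration multiplicities with graded stalk ranks. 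I expect this, rather than the two-sided squeeze, to be the real pitfall when making the argument precise.
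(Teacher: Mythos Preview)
Your proposal is correct and follows essentially the same route as the paper: apply $\Phi$ to $\SB_k(w)$, use Conjecture~\ref{momconj} and Theorem~\ref{Mth} to read off Verma multiplicities, split off $Q^\prime_{\tS}(w\cdot 0)$ to obtain the upper bound via Brauer--Humphreys reciprocity, and then close the squeeze with the AJS lower bound from \cite{MR1272539}. The only cosmetic difference is that the paper works directly with $w\in\hCW^{\circ,-}$ rather than first reducing to $\hCW^{\text{res,$-$}}$, and it phrases the splitting-off step as ``$\Phi(\SB_k(w))$ surjects onto $Z^\prime_{\tS}(w\cdot 0)$ because $(\Phi(\SB_k(w)):Z^\prime_{\tS}(\mu))=0$ for $\mu\not\ge w\cdot 0$ and $=1$ for $\mu=w\cdot 0$''; your remark about the $\hS\to\tS$ base change is an apt caution but, as the paper notes, this is already absorbed into the construction of $\Phi$.
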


Now  Theorems \ref{pbigenough} and \ref{multone} yield the following
result (cf.~\cite{fiebig-2007}).
\begin{theorem}
\begin{enumerate}
\item Suppose that $\ch k$ is big enough. Then Lusztig's conjecture
  $\ref{Lconj}$ holds.
\item Suppose that $\ch k> h$. Then for all $w,x\in\hCW$ we have 
$$
[Z^\prime(x\cdot 0):L^\prime(w\cdot 0)]=1\text{ if and only if } p_{A_{w_0x},A_{w_0w}}(1)=1.
$$
\end{enumerate}
\end{theorem}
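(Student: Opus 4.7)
My plan is to deduce both statements from the three results immediately preceding this theorem: the implication Conjecture \ref{momconj}~$\Rightarrow$~Conjecture \ref{Lconj}, Theorem \ref{pbigenough} (Conjecture \ref{momconj} is known for $\ch k=0$, and for each fixed $w$ whenever $\ch k$ is sufficiently large), and Theorem \ref{multone} (its multiplicity-one case holds whenever $\hCG_{k,\le w}$ is a GKM-graph, ensured by $\ch k\ge h$ via Lemma \ref{GKMprop}).

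For part (1), the decisive observation is that Lusztig's conjecture concerns $w\in\hCW^{\text{res,$+$}}$ and this set is finite: since $\ch k\ge h$ the point $0$ is regular for the dot action, so distinct $w$'s produce distinct weights $w\cdot 0\in X^+_p$, and $X^+_p$ is manifestly finite. By Theorem \ref{Mth} the intersection sheaves $\SB_k(w)$ needed to handle this range of $w$ therefore run over a finite set as well. Applying Theorem \ref{pbigenough}(2) to each such $w$ and taking $\ch k$ large enough to accommodate all of them simultaneously, Conjecture \ref{momconj} is verified in every case required, whereupon the preceding theorem yields Lusztig's conjecture.

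For part (2), fix $\ch k>h$, so that $\hCG_{k,\le w}$ is GKM for all relevant $w$. The identity $h_{w_0x,w_0w}(1)=p_{A_x,A_w}(1)$ recalled earlier gives $p_{A_{w_0x},A_{w_0w}}(1)=h_{x,w}(1)=P_{x,w}(1)$, and since $P_{x,w}$ has non-negative integer coefficients, the value $1$ is attained precisely when $P_{x,w}=1$, i.e.~$h_{x,w}=v^{l(w)-l(x)}$. If $p_{A_{w_0x},A_{w_0w}}(1)=1$, Theorem \ref{multone} then gives $\grk\,\SB_k(w)^x=1$; Theorem \ref{Mth} together with the reciprocity principle of Section \ref{equivproj} produces the upper bound $[Z^\prime(x\cdot 0):L^\prime(w\cdot 0)]\le 1$, and the unconditional lower bound $[Z^\prime(x\cdot 0):L^\prime(w\cdot 0)]\ge p_{A_{w_0x},A_{w_0w}}(1)=1$ extracted inductively from \cite{MR1272539} (pp.~238/239) completes the equality. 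Conversely, if $[Z^\prime(x\cdot 0):L^\prime(w\cdot 0)]=1$, the same AJS lower bound forces $p_{A_{w_0x},A_{w_0w}}(1)\in\{0,1\}$; the value $0$ is ruled out because it would force $P_{x,w}=0$, hence $x\not\le w$, hence $\SB_k(w)^x=0$ by property (1) of the characterization of the intersection sheaf, contradicting the upper bound from Theorem \ref{Mth}.

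The delicate point that must be verified is the compatibility of the various ingredients: the AJS lower bound, the implication Conjecture \ref{momconj}~$\Rightarrow$~Conjecture \ref{genconj}, and the identity $h_{w_0x,w_0w}(1)=p_{A_x,A_w}(1)$ must each apply to the pairs $(x,w)$ actually occurring. Beyond this bookkeeping, both statements are essentially formal consequences: part (1) is a pure finiteness argument coupled with Theorem \ref{pbigenough}(2), while part (2) is the squeeze between the upper bound furnished by the multiplicity-one case of Conjecture \ref{momconj} and the standard lower bound coming from AJS.
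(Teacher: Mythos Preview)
Your proposal is correct and follows the route the paper itself indicates: the paper gives no formal proof but states that the theorem follows from Theorems~\ref{pbigenough} and~\ref{multone} together with the implication Conjecture~\ref{momconj}~$\Rightarrow$~Conjecture~\ref{genconj}, and remarks that part~(2) additionally requires knowing when each side is non-zero. Your treatment of this last point---rewriting $p_{A_{w_0x},A_{w_0w}}(1)=h_{x,w}(1)$, invoking non-negativity of affine Kazhdan--Lusztig polynomials, and combining the unconditional upper bound $[Z'(x\cdot 0):L'(w\cdot 0)]\le(\grk\,\SB_k(w)^x)(1)$ from Theorem~\ref{Mth} with the AJS lower bound---is precisely the missing ingredient the paper alludes to.

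One point in part~(1) needs tightening. Your finiteness argument injects $\hCW^{\text{res},+}$ into $X_p^+$, but $|X_p^+|$ grows with $p$; as written, the set of $w$'s to which you apply Theorem~\ref{pbigenough}(2) appears to depend on the very $p$ you are trying to choose, which is circular. What is actually needed is that $\hCW^{\text{res},-}$ (hence $\hCW^\circ=\{w\le\hw_0\}$), regarded as a subset of the \emph{abstract} Coxeter group $\hCW$, is independent of $p$ once $p\ge h$. This is true and standard, and with it your argument goes through; but your bound via $|X_p^+|$ does not establish it. The reduction you flag as ``bookkeeping'' in part~(2)---that Theorem~\ref{Mth} literally applies only to $w\in\hCW^{\circ,-}$, whereas the statement is for all $w$---is handled by the $pX$-symmetry of $\CC$ mentioned after Conjecture~\ref{genconj}, together with the translation invariance of the periodic polynomials.
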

For the proof of part (2) of the above theorem
one needs, in addition to the arguments outlined above, knowledge
on whether each of the numbers on both sides is non-zero. 

\def\cprime{$'$}
\providecommand{\bysame}{\leavevmode\hbox to3em{\hrulefill}\thinspace}
\providecommand{\MR}{\relax\ifhmode\unskip\space\fi MR }
\providecommand{\MRhref}[2]{%
  \href{http://www.ams.org/mathscinet-getitem?mr=#1}{#2}
}
\providecommand{\href}[2]{#2}

\end{document}